\theoremstyle{definition}
\theoremstyle{remark}
\theoremstyle{plain}
\newtheorem{thm}{Theorem}
\newtheorem{lem}[thm]{Lemma}
\newtheorem{cor}[thm]{Corollary}
\DeclareMathOperator{\diam}{diam}
\DeclareMathOperator{\Bad}{Bad}
\DeclareMathOperator{\supp}{supp}
\DeclareMathOperator{\dimh}{dim_H}
\DeclareMathOperator{\hdim}{dim_H}
\author{S. Kristensen}
\address{S. Kristensen, Department of Mathematics, Aarhus University, Ny Munkegade 118, DK-8000 Aarhus C, Denmark}
\email{sik@math.au.dk}
\author{T. Persson}
\address{T. Persson, Centre for Mathematical Sciences, Lund University, Box 118, SE-22100 Lund, Sweden}
\email{tomasp@maths.lth.se}
\thanks{We thank Niclas Technau for pointing out to us that the
  paper \cite{chow2023dispersion} contains a result that implies a
  previous version of our Theorem~\ref{thm:measure}. This lead
  to the present and stronger version of
  Theorem~\ref{thm:measure}. We also thank an anonymous referee for a careful reading of the original manuscript. SK's research is supported by the
  Independent Research Fund Denmark (Grant ref. 1026-00081B) and
  the Aarhus University Research Foundation (Grant
  ref. AUFF-E-2021-9-20)}
\subjclass[2020]{11J83, 28A78}
\begin{document}

\title{On the distribution of sequences of the form $(q_ny)$}

\begin{abstract}
  We study the distribution of sequences of the form
  $(q_ny)_{n=1}^\infty$, where $(q_n)_{n=1}^\infty$ is some
  increasing sequence of integers. In particular, we study the
  Lebesgue measure and find bounds on the Hausdorff dimension of
  the set of points $\gamma \in [0,1)$ which are well
  approximated by points in the sequence
  $(q_ny)_{n=1}^\infty$. The bounds on Hausdorff dimension are
  valid for almost every $y$ in the support of a measure of
  positive Fourier dimension. When the required rate of
  approximation is very good or if our sequence is sufficiently
  rapidly growing, our dimension bounds are sharp. If the measure
  of positive Fourier dimension is itself Lebesgue measure, our
  measure bounds are also sharp for a very large class of
  sequences. 
\end{abstract}

\maketitle

\section{Introduction}

Let $y$ be fixed real number and let $(q_n)_{n=1}^\infty$ be a sequence of positive integers. We consider the set
\begin{equation}
  \label{eq:set}
  W_{y,\alpha} = \{\, \gamma \in [0,1) : \Vert q_ny - \gamma\Vert <
  n^{-\alpha} \text{ for infinitely many } n \in \mathbb{N} \,\},
\end{equation}
where $\Vert \cdot \Vert$ denotes the distance to the nearest integer.

The study of this set is not a new one. For instance, if
$(q_n)_{n=1}^\infty$ grows exponentially but not
super-exponentially fast and $\alpha < \frac{1}{2}$, it is shown
in \cite{MR3133988} that for almost every $y$ with respect to any
measure with sufficiently nice Fourier transform, the set
$W_{y,\alpha}$ is in fact the entire interval $[0,1)$.

In this paper, we continue this study using results and ideas
from \cite{MR3133988} and \cite{MR3341452}, in particular a
certain discrepance estimate \cite[Corollary~7]{MR3133988} and a
lemma on Hausdorff dimension, Lemma~\ref{lem:perssonreeve}.  The
motivation in \cite{MR3133988} for studying this set was
applications to certain Littlewood-type problems. More
concretely, a version of the inhomogeneous Littlewood conjecture
states that for all $x \in \mathbb{R}$, $y > 0$ irrational and
$\gamma \in \mathbb{R}$,
\[
  \liminf_{q \rightarrow \infty} q \Vert qx \Vert \Vert qy -
  \gamma \Vert = 0,
\]
where the liminf is over all positive integers. A consequence of
the main result in \cite{MR3133988} is that for any fixed badly
approximable number $x \in \Bad$, there is a set $G \subseteq \Bad$ of
maximal Hausdorff dimension such that for all $y \in G$ and all
$\gamma \in \mathbb{R}$,
\begin{equation}
  \label{eq:HJK}
  q \Vert qx \Vert \Vert qy - \gamma \Vert < \frac{1}{(\log
    q)^{1/2 - \epsilon}} \qquad  \text{for infinitely many
    integers } q.
\end{equation}
As always, the set $\Bad$ is the set of badly approximable numbers, i.e.
\[
  \Bad = \Bigl\{ \, x \in [0,1) : \liminf_{q \rightarrow \infty}
  q \Vert qx \Vert = 0 \, \Bigr\}.
\]
Of course, this set is equal to the set of numbers for which the
sequence of partial quotients in the simple continued fraction
expansion is bounded, see e.g.\ \cite{MR161833}.

In fact, for $x \in \Bad$, \eqref{eq:HJK} was shown to hold for
$\mu$-almost every $y$ in the support of a probability measure
$\mu$ of positive Fourier dimension, a notion which we will
define along with the notion of Hausdorff dimension in the next
section.

In fact, the right hand side of \eqref{eq:HJK} was improved by
Chow and Zafeiropoulos \cite{MR4258964} to
$(\log\log\log q)^{\epsilon + 1/2}/(\log q)^{1/2}$, though one
should note that the power of the logarithm in the denominator is
$1/2$. This reflects in the exponent $\alpha$ in the definition
of the set $W_{y,\alpha}$. Indeed, the exponent $1/2$ is a
limiting case for which the methods of \cite{MR3133988} give
\eqref{eq:HJK} which comes from a discrepancy bound. This in turn
works only for lacunary sequences, and can only give an exponent
smaller than $1/2$. In this case, the discrepancy bound shows
that $W_{y,\alpha} = [0,1)$, so that \eqref{eq:HJK} holds. It
appears to be very difficult to prove properties of this set for
$\alpha \ge 1/2$ as remarked in \cite{MR4425845}. In fact, it
follows from the work of Technau and Zafeiropoulos
\cite{MR4112677} that methods involving the discrepancy of
sequences of the form $(q_n \alpha)_{n=1}^\infty$ will not
accomplish this.

We are able to obtain some results on the size of $W_{y,\alpha}$
for $\alpha$ in the range between $1/2$ and $1$, although we
suspect that these are not best possible. For $\alpha < 1$ and
some non-lacunary (and for lacunary) sequences $(q_n)$ we are
able to prove that for $\mu$-almost every $y$ in the support of a
probability measure of positive Fourier dimension, the set
$W_{y,\alpha}$ has positive Lebesgue measure, see
Theorem~\ref{thm:measure}. If the sequence $(q_n)$ is lacunary
and $\alpha = 1$, then Chow and Technau
\cite[Theorem~1.5]{chow2023dispersion} showed that
$W_{y,\alpha} = [0,1)$ for $\mu$-almost every $y$.

If $\mu$ is the Lebesgue measure, we are able to relax the growth
condition on the sequence $(q_n)$ to being just a little more
than linear, while at the same time prove that the Lebesgue
measure of $W_{y,\alpha}$ is equal to $1$, see
Theorem~\ref{thm:Lmeasure}. Of course, the result of Chow and
Technau implies this and more when $(q_n)$ is lacunary, but our
result allows for less restrictive growth conditions.

For $\alpha > 1$, an application of the Borel--Cantelli lemma
immediately shows that the set is Lebesgue null for all
$x$. However, it is not usually empty, and in this note we will
derive some properties of the set $W_{y,\alpha}$ in the
$\alpha > 1$ range. A special case of our results is that for
Lebesgue almost all $y$, the set has Hausdorff dimension
$\frac{1}{\alpha}$, though our result in Theorem~\ref{thm:Hdim}
is more general. We will state it precisely, once we have
provided the technical background for doing so.

In the above mentioned result on the Hausdorff dimension, it is
critical that the sequence $(q_n)_{n=1}^\infty$ grows
geometrically fast. Nonetheless, the problem of studying the
distribution of orbits of fractional parts of $q_ny$ for such
sequences has attracted quite a bit of attention. For instance,
if the sequence $(q_n)_{n=1}^\infty$ contains all numbers of the
form $2^i3^j$ in increasing order, Furstenberg \cite{MR213508}
famously showed that orbits are either finite or dense and
conjectured that dense orbits are uniformly
distributed. Similarly, initiated by the work of Rudnick and
Sarnak \cite{MR1628282}, the distribution of sequences of the
form $(n^d y)_{n=1}^\infty$ has attracted a considerable amount
of attention. While we do not claim to solve any of the famous
and important open problems about the distribution of these
sequences, we are able to find upper and lower bounds on the
Hausdorff dimension of $W_{y,\alpha}$ regardless of any growth
conditions (Theorem~\ref{thm:Hdim2}), which are valid for almost all
$y$ in an appropriate sense. Specialising our main result to the
Lebesgue measure, we find that for $\alpha > 1$ and for Lebesgue
almost every $y$, the Hausdorff dimension of $W_{y,\alpha}$ is
somewhere in between $\frac1{2\alpha}$ and $\frac{1}{\alpha}$.

The paper is structured as follows. In the next section, we will
provide the relevant definitions of notions from uniform
distribution theory and fractal geometry. With these notions in
place, we will state our main theorems in
Section~\ref{sec:statements}, where we will also derive some
corollaries of these. The main theorems are proved in
Section~\ref{sec:proofs}. We end the paper with some concluding
remarks.

\section{Some technical background}
\label{sec:background}

We will let $\lambda$ denote the Lebesgue measure on $[0,1]$.
Our approach depends heavily on the notion of the discrepancy of
a sequence, which is a measure of how far the sequence is from
being uniformly distributed. For a sequence $(x_n)_{n=1}^\infty$
in $[0,1)$ and $N \in \mathbb{N}$, we define the $N$-discrepancy
of the sequence by
\[
  D_N(x_n) = \sup_{I \subseteq [0,1)} \vert \# \{\, n \le N : x_n
  \in I \,\} - N\,\lambda(I)\vert,
\]
where the \emph{supremum} is taken over all sub-intervals of
$[0,1)$. The sequence $(x_n)_{n=1}^\infty$ is said to be
uniformly distributed in $[0,1)$ if $D_N(x_n) = o(N)$.

A word of caution is appropriate here. Some authors define the discrepancy of a sequence to be the above quantity divided by $N$. In this notion of discrepancy, the quantity can perhaps most easily be understood to be a speed of convergence of the probability measures $\frac{1}{N}\sum_{n=1}^N \delta_{x_n}$ to the Lebesgue measure. The former notion favours counting over measure theory. The two notions are of course entirely equivalent, but the double meaning of the word `discrepancy' can cause confusion. \emph{In this paper, discrepancy is taken to mean the former.}

Many of our results deal with Hausdorff dimension, which we now define. For a set $E \subseteq \mathbb{R}^n$ and real numbers $s \ge 0$ and $\delta > 0$, consider the quantity,
\[
\mathcal{H}_\delta^s (E) = \inf_{\mathcal{C}_\delta} \sum_{U \in \mathcal{C_\delta}} \diam(U)^s,
\]
where the \emph{infimum} is taken over all covers $\mathcal{C}_\delta$ of $E$ with sets of diameter at most $\delta$. As $\delta$ decreases, the collection of such covers becomes smaller, so that $\mathcal{H}_\delta^s (E)$ must increase. Allowing infinite limits, this means that
\[
\mathcal{H}^s (E) = \lim_{\delta \rightarrow 0} \mathcal{H}_\delta^s (E)
\]
exists. In fact, it is a regular Borel measure, and for all but
possibly one value of $s$, the value attained by
$\mathcal{H}^s (E)$ is either $0$ or $\infty$. Furthermore,
$\mathcal{H}^s (E) = 0$ for $s > n$. Since $\mathcal{H}^s (E)$
evidently decreases as $s$ increases, we can define the Hausdorff
dimension of $E$ to be the `breaking point', i.e.
\[
\dimh(E) = \inf \{\, s \ge 0 : \mathcal{H}^s (E) = 0 \,\}.
\]

A particularly pleasing type of sets are intersective sets in the
sense of Falconer, see \cite{MR1260112}. For $s > 0$, the
Falconer class $\mathcal{G}^s$ is defined as the collection of
$G_\delta$-sets $F$ of $\mathbb{R}^n$ for which
\[
\dimh \biggl( \bigcap_{i=1}^\infty f_i(F) \biggr) \ge s,
\]
for any countable sequence $(f_i)_{i=1}^\infty$ of similarities,
and we say that $E$ is intersective if $E \in \mathcal{G}^s$ for
some $s > 0$. Among the desirable properties of the families
$\mathcal{G}^s$ we find the fact that it is closed under
countable intersections.

We will consider the intersection classes $\mathcal{G}^s$
restricted to $[0,1)$, which we denote by
$\mathcal{G}^s ([0,1))$. A set $E$ belongs to
$\mathcal{G}^s ([0,1))$ if there is a set $F \in \mathcal{G}^s$
such that $E = [0,1) \cap F$. Sets in $\mathcal{G}^s ([0,1))$
have Hausdorff dimension at least $s$, and
$\mathcal{G}^s ([0,1))$ is closed under countable intersections.

A final notion, which will play an important role is the Fourier
dimension of a measure, see e.g.\ \cite{MR3617376}. The Fourier
dimension of a Radon measure $\mu$ on $\mathbb{R}$ is defined as
\begin{equation}
  \label{def:Fdim}
  \dim_F(\mu) = \sup \{\, s \in [0,1] : \vert \hat{\mu}(\xi) \vert
  = O(|\xi|^{-s/2}) \,\}.
\end{equation}
Here, $\hat{\mu}$ is the Fourier transform of the measure defined as usual to be
\[
\hat{\mu}(\xi) = \int e^{2\pi i x \xi} \, \mathrm{d}\mu(x).
\]
The terminology originates in the fact that for a set $E \subseteq \mathbb{R}$, any measure supported on $E$ has Fourier dimension at most equal to the Hausdorff dimension of $E$. The Fourier dimension of the set $E$ is the supremum of the Fourier dimensions of the Radon measures supported on $E$.

Note that a set $E$ need not have the same Hausdorff and Fourier dimension. For instance, the ternary Cantor set has Hausdorff dimension $\log 2/ \log 3$ by a classical calculation found in all textbooks on fractal geometry. However, it was shown by Kahane and Salem \cite{MR160065} that the Fourier dimension of the ternary Cantor set is equal to zero. This result in fact follows rather easily from the results of both \cite{MR3133988} and this paper.

\section{Statement of results}
\label{sec:statements}

From \cite{MR3133988}, we know that if $\mu$ is some measure
supported on $[0,1)$ of positive Fourier dimension, then for
$\mu$-almost every $y$, there is a positive number $\nu$, such
that $W_{y,\alpha} = [0,1)$ for $\alpha < \nu$. If we further
assume that $(q_n)_{n=1}^\infty$ is lacunary, then Chow and
Technau \cite[Theorem~1.11]{chow2023dispersion} recently showed
that this holds true for any $\alpha < 1$. In fact they obtained
an even stronger result: Changing the rate $n^{-\alpha}$ to
$n^{-1} (\log n)^{3 + \varepsilon}$, they proved that
\[
  \{\, \gamma \in [0,1) : \Vert q_ny - \gamma\Vert < n^{-1}(\log
  n)^{3+\varepsilon} \text{ for infinitely many } n \in
    \mathbb{N} \,\} = [0,1)
\]
for almost every $y$.

Our first results is valid also for some non-lacunary sequences
including sequences of the form $q_n = n^d$.

\begin{thm}
  \label{thm:measure}
  Suppose that $\mu$ is a probability measure on $\mathbb{R}$ of
  positive Fourier dimension with decay
  $|\hat{\mu} (\xi)| = O(|\xi|^{-\tau})$. If $(q_n)_{n=1}^\infty$
  is a sequence of integers with
  $|q_n - q_m| > c |n-m|^{\frac{1}{\tau} + \varepsilon}$ for some
  $c, \varepsilon > 0$, and $\alpha < 1$, then
  $\lambda (W_{y,\alpha}) > 0$ for $\mu$-almost every $y$.
\end{thm}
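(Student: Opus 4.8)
The key is a second-moment (variance) argument combined with the Fourier decay of $\mu$. Fix $\alpha < 1$ and, to make the argument robust, pick $\beta$ with $\alpha < \beta < 1$; it suffices to estimate the measure of the set $W_{y,\beta}$ (which is contained in $W_{y,\alpha}$). For a dyadic scale $N$ and a fixed interval $I \subseteq [0,1)$, introduce the counting function
\[
  S_N(y, \gamma) = \sum_{n=N}^{2N} \mathbf{1}_{\{\Vert q_n y - \gamma \Vert < n^{-\beta}\}},
\]
and the associated ``expected'' count $E_N = \sum_{n=N}^{2N} 2n^{-\beta} \asymp N^{1-\beta}$, which tends to infinity since $\beta < 1$. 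The goal is to show that for $\mu$-almost every $y$ and for every $\gamma$ in a set of positive Lebesgue measure, $S_N(y,\gamma) > 0$ for infinitely many $N$; a Borel--Cantelli-type / Fatou argument on $\int \int S_N \, d\lambda(\gamma) \, d\mu(y)$ together with a variance bound $\int \int (S_N - \lambda\text{-average})^2$ should then deliver $\lambda(W_{y,\beta}) > 0$ for $\mu$-a.e.\ $y$. (Alternatively one can phrase the whole thing through the measures $\tfrac1{E_N}\sum_n \mathbf{1}_{\{\Vert q_n y - \cdot\Vert < n^{-\beta}\}} \, d\lambda$ converging weakly to something comparable to $\lambda$.)

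\textbf{Using the Fourier decay.} To control both the first and second moments one expands the indicator $\mathbf{1}_{\{\Vert t \Vert < \rho\}}$ in a Fourier series: writing $\chi_\rho(t) = \sum_{k \in \mathbb{Z}} \widehat{\chi_\rho}(k) e^{2\pi i k t}$ with $\widehat{\chi_\rho}(0) = 2\rho$ and $|\widehat{\chi_\rho}(k)| \le \min(2\rho, \pi^{-1}|k|^{-1})$. Then $\int S_N(y,\gamma)\,d\lambda(\gamma)$ is exactly $E_N$ (only the $k=0$ term survives the $\gamma$-integration), and the variance over $\gamma$ is
\[
  \int_0^1 \Bigl( S_N(y,\gamma) - E_N \Bigr)^2 d\lambda(\gamma)
  = \sum_{k \ne 0} \Bigl| \sum_{n=N}^{2N} \widehat{\chi_{n^{-\beta}}}(k)\, e^{2\pi i k q_n y} \Bigr|^2 .
\]
Now integrate this over $y$ against $\mu$. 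Expanding the square produces diagonal terms of size $\sum_k |\widehat{\chi_{n^{-\beta}}}(k)|^2 \asymp N^{1-\beta}$ per pair on the diagonal (so $\asymp E_N$ in total, which is the harmless term), and off-diagonal terms of the form $\sum_{k \ne 0} \widehat{\chi}(k)\overline{\widehat{\chi}(k)} \, \hat{\mu}\bigl(k(q_n - q_m)\bigr)$. Here the hypothesis $|\hat\mu(\xi)| = O(|\xi|^{-\tau})$ enters: each such term is $O\bigl(|k(q_n-q_m)|^{-\tau}\bigr)$, and the growth assumption $|q_n - q_m| > c|n-m|^{1/\tau + \varepsilon}$ makes $|q_n - q_m|^{-\tau} < c^{-\tau}|n-m|^{-1-\tau\varepsilon}$ summable in $|n-m|$. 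Summing the $k$-factor $\sum_{k\neq 0} |\widehat{\chi_{n^{-\beta}}}(k)|\,|\widehat{\chi_{m^{-\beta}}}(k)|\,|k|^{-\tau}$ gives something like $O(N^{-\beta} \cdot \text{const})$ or at worst a mild power of $N$, so the total off-diagonal contribution is $o(E_N^2)$.

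\textbf{Conclusion and main obstacle.} Once one has $\int\!\!\int S_N \, d\lambda\, d\mu = E_N \to \infty$ and $\int\!\!\int |S_N - E_N|^2 \, d\lambda\, d\mu = o(E_N^2)$, a Paley--Zygmund / Chebyshev argument shows that for each $N$ the set of $(y,\gamma)$ with $S_N(y,\gamma) \ge \tfrac12 E_N$ has measure bounded below, and then a limsup (quasi-independence across a sparse sequence of scales $N_j$, or just Fatou on $\limsup_N \mathbf{1}_{\{S_N > 0\}}$) plus Fubini yields that for $\mu$-a.e.\ $y$, the set $\{\gamma : \Vert q_ny-\gamma\Vert < n^{-\beta} \text{ i.o.}\}$ has positive $\lambda$-measure; since $\beta > \alpha$ this set sits inside $W_{y,\alpha}$. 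The main obstacle I anticipate is the bookkeeping in the double sum over $k$, $n$, $m$: one must carefully split the $k$-sum at $|k| \asymp n^{\beta}$ (where the two bounds on $\widehat{\chi}(k)$ cross) and verify that the interaction between the decay in $k$, the decay in $|q_n-q_m|$, and the fact that $\beta<1$ is just barely enough — the exponent $\tfrac1\tau+\varepsilon$ in the hypothesis is exactly what is needed for the $|n-m|$-sum to converge, so there is essentially no slack there, and getting a clean $o(E_N^2)$ rather than $O(E_N^2)$ will require using the $\varepsilon$. A secondary technical point is passing from ``positive measure at infinitely many scales'' to ``positive measure of the limsup set'', which needs either a divergence Borel--Cantelli variant for the sets $\{S_{N_j}>0\}$ along a lacunary sequence of $N_j$'s, or the observation that the $\limsup$ set's measure is at least $\limsup_j \lambda(\{S_{N_j}(y,\cdot)>0\})$ by reverse Fatou.
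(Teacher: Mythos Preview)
Your plan is correct and follows the same core strategy as the paper: a second-moment argument in which the Fourier decay of $\mu$ and the spacing hypothesis on $(q_n)$ combine to control pairwise correlations, followed by Chung--Erd\H{o}s\,/\,Paley--Zygmund (these are the same Cauchy--Schwarz inequality) and a limsup over sparse scales with Fubini at the end. The packaging differs: rather than computing the $\lambda$-variance of the counting function via Parseval, the paper crudely bounds $\lambda(A_k\cap A_l)\le \lambda(A_l)\,\mathbbm{1}_{B(0,2k^{-\alpha})}((q_k-q_l)y)$, Fourier-expands only this last indicator in $y$, and then uses Markov on the $\mu$-average of $C_{m,n}(y)=\sum_{k,l}\lambda(A_k\cap A_l)$ to pass to almost-every $y$. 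Your exact Parseval expansion is in fact sharper than the paper's crude overlap bound: it recovers the zero-frequency term as exactly $\lambda(A_n)\lambda(A_m)$ rather than twice that, so that $\iint S_N^2\,\mathrm{d}\lambda\,\mathrm{d}\mu=(1+o(1))E_N^2$ and hence $(\mu\times\lambda)(S_N>0)\to 1$. Combined with reverse Fatou along dyadic $N$ and Fubini, this actually yields $\lambda(W_{y,\alpha})=1$ for $\mu$-almost every $y$ --- a conclusion the paper obtains only for $\mu=\lambda$, via a separate inflation argument of Cassels (Theorem~\ref{thm:Lmeasure}). Your anticipated ``main obstacle'' of getting $o(E_N^2)$ rather than $O(E_N^2)$ for the off-diagonal is not in fact delicate: the $k$-sum is $O(N^{-\beta(1+\tau)})$ and the $(n,m)$-sum is $O(N)$, and $N^{1-\beta(1+\tau)}=o(N^{2-2\beta})$ reduces to $\beta(1-\tau)<1$, which holds for every $\beta<1$ and $\tau>0$.
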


In particular, Theorem~\ref{thm:measure} applies to the case when
$\mu$ is the Lebesgue measure and $(q_n)$ is given by an integer
polynomial of degree at least $3$.

Specialising $\mu$ to be the Lebesgue measure on $[0,1]$, we can
obtain the following stronger result.

\begin{thm}
  \label{thm:Lmeasure}
  Suppose that $(q_n)_{n=1}^\infty$ is a sequence of integers
  with $|q_n - q_m| > c |n-m|^{1 + \varepsilon}$. For
  $\alpha < 1$, the set $W_{y,\alpha}$ is Lebesgue full for
  Lebesgue-almost every $y$.
\end{thm}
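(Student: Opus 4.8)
The plan is to run a second-moment (Paley--Zygmund-type) argument on the counting function for the sequence $(q_n y)$ hitting a small neighbourhood of a target $\gamma$, averaged over $y \in [0,1)$. Fix $\gamma$ for the moment and, for a large parameter $N$, set
\[
  S_N(y) = \sum_{n=N+1}^{2N} \mathbbm{1}_{\Vert q_n y - \gamma \Vert < n^{-\alpha}}.
\]
First I would compute $\int_0^1 S_N(y)\,\mathrm{d}y$: since $\Vert q_n y - \gamma\Vert < n^{-\alpha}$ defines, as $y$ ranges over $[0,1)$, a union of $q_n$ equally spaced intervals of total length $2n^{-\alpha}$, the expectation is exactly $\sum_{n=N+1}^{2N} 2n^{-\alpha}$, which is of order $N^{1-\alpha}$ and hence tends to infinity because $\alpha < 1$. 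Next I would estimate the second moment $\int_0^1 S_N(y)^2\,\mathrm{d}y = \sum_{n,m} \lambda(A_n \cap A_m)$, where $A_n = \{y : \Vert q_n y - \gamma\Vert < n^{-\alpha}\}$. The diagonal contributes $\asymp N^{1-\alpha}$. For the off-diagonal terms, the condition $\Vert q_n y - \gamma\Vert < n^{-\alpha}$ and $\Vert q_m y - \gamma\Vert < m^{-\alpha}$ forces $\Vert (q_n - q_m) y\Vert < 2 n^{-\alpha}$ (taking $n<m$, and absorbing constants), so $y$ lies in a union of $|q_n-q_m|$ intervals of total length $\asymp n^{-\alpha}$; intersecting with the $q_n$ intervals defining $A_n$ and using the growth hypothesis $|q_n - q_m| > c|n-m|^{1+\varepsilon}$, the joint measure is bounded by $\lambda(A_n)\cdot(\text{something small})$ plus an error coming from the fact that a short interval of $A_n$ may fail to be equidistributed modulo $1/|q_n-q_m|$. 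Carrying this out carefully should give
\[
  \int_0^1 S_N(y)^2\,\mathrm{d}y \le \Bigl(\int_0^1 S_N(y)\,\mathrm{d}y\Bigr)^2 (1 + o(1)) + O(N^{1-\alpha}),
\]
so that by the Paley--Zygmund inequality $\lambda(\{y : S_N(y) > 0\})$ is bounded below by a positive constant uniformly in $N$ and in $\gamma$.

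The point of the off-diagonal estimate is that $|q_n - q_m|^{-1}$ decays polynomially in $|n-m|$ with exponent strictly larger than $1$, which is exactly what makes the sum $\sum_{n \ne m} |q_n - q_m|^{-1}$ over the dyadic block contribute only a lower-order term; this is where the hypothesis $1+\varepsilon$ (rather than just $1$) is used. Having a positive-measure lower bound for $\{y : S_N(y)>0\}$ for every $N$ does not immediately give a full-measure conclusion, so the next step is to upgrade. I would use a zero--one law: the set $W_{y,\alpha}$, viewed as a function of $y$, and more precisely the event that $\gamma \in W_{y,\alpha}$ for a fixed $\gamma$, is a tail event in an appropriate sense (it depends only on the tail of the sequence $(q_n y)$), and one can invoke Cassels' or a Borel--Cantelli-type zero-one law — or simply note that $\limsup_N S_N(y) = \infty$ on a set of full measure by a standard divergence Borel--Cantelli argument combined with the quasi-independence encoded in the second-moment bound (the Erd\H{o}s--R\'enyi / Kochen--Stone lemma). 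This yields: for each fixed $\gamma$, $\gamma \in W_{y,\alpha}$ for Lebesgue-almost every $y$.

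Finally I would swap the roles of $\gamma$ and $y$ by Fubini. The set $\{(y,\gamma) \in [0,1)^2 : \gamma \in W_{y,\alpha}\}$ is measurable (it is a countable intersection of countable unions of open conditions), and by the previous paragraph each $\gamma$-slice has full measure, hence the whole set has full planar measure, hence Lebesgue-almost every $y$-slice has full measure; that $y$-slice is exactly $W_{y,\alpha}$. The main obstacle I anticipate is the off-diagonal estimate: one must control $\lambda(A_n \cap A_m)$ not just by the trivial bound $\min(\lambda(A_n),\lambda(A_m))$ but show it is genuinely close to the product $\lambda(A_n)\lambda(A_m)$ up to a summable-in-$(n-m)$ error, and the subtlety is that $A_n$ is a union of $q_n$ tiny intervals whose distribution modulo $1/|q_n - q_m|$ need not be perfectly uniform — handling this discretisation error, presumably via the separation hypothesis on $(q_n)$ and an elementary counting argument (rather than a discrepancy bound, since for Lebesgue measure we can afford to be crude), is the crux. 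A secondary technical point is making the Paley--Zygmund lower bound uniform in $\gamma$, which is needed for the Fubini step to be quantitatively meaningful, though in fact only measurability and a.e.-in-$y$ for fixed $\gamma$ is logically required.
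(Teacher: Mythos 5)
Your overall architecture (fix $\gamma$, prove the dual statement that $\lVert q_ny-\gamma\rVert<n^{-\alpha}$ infinitely often for a.e.\ $y$, then swap by Fubini) is legitimate, but the crux estimate is not the one you describe, and this is a genuine gap. In your decomposition you average over $y$ with respect to \emph{Lebesgue} measure, so the reduction to $\lVert (q_n-q_m)y\rVert<2n^{-\alpha}$ buys nothing: the set $\{y:\lVert (q_n-q_m)y\rVert<2n^{-\alpha}\}$ has measure $\min(1,4n^{-\alpha})$ no matter how large $|q_n-q_m|$ is, so no sum of the form $\sum_{n\neq m}|q_n-q_m|^{-1}$ ever appears, and the hypothesis $|q_n-q_m|>c|n-m|^{1+\varepsilon}$ does not enter your computation in the way you claim. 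The true correlations of the sets $A_n=\{y:\lVert q_ny-\gamma\rVert<n^{-\alpha}\}$ (each a union of $q_n$ short intervals) are governed by the resonances $jq_n+j'q_m=0$, i.e.\ by $\gcd(q_n,q_m)$, which the separation hypothesis does not control at all: for instance, with $\gamma=0$ and $q_n=2^n$ one has $\lambda(A_n\cap A_{n+1})\asymp n^{-\alpha}\gg \lambda(A_n)\lambda(A_{n+1})$ even though $|q_{n+1}-q_n|$ is huge, so your claimed bound $\lambda(A_n\cap A_m)\le\lambda(A_n)\lambda(A_m)(1+o(1))+{}$summable error is false pairwise. One can rescue your route, but only by bounding the off-diagonal block sum through a G\'al-type gcd-sum estimate (error per pair $\lesssim \frac{\gcd(q_n,q_m)}{\max(q_n,q_m)}\,n^{-\alpha}\log N$, then summing over pairs), a nontrivial ingredient your proposal does not supply. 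The paper sidesteps this entirely: it runs the second moment in the $\gamma$-variable, where each approximating set is a \emph{single} interval $B(q_ky,k^{-\alpha})$, and averages over $y$ against $\mu$; there the overlap genuinely is controlled by $\hat{\mu}(j(q_k-q_l))$, which is exactly where the hypothesis on $|q_n-q_m|$ is used (this is Theorem~\ref{thm:measure}, applied with $\mu=\lambda$, $\tau=1$, as the input to Theorem~\ref{thm:Lmeasure}).

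The upgrade from positive to full measure is also a gap as written. The event $\{y:\gamma\in W_{y,\alpha}\}$ is not a tail event of any independent structure, so Kolmogorov gives nothing, and Cassels' inflation method does not yield a fixed-$\gamma$ zero--one law: dilating $y\mapsto Ty \pmod 1$ turns $\lVert q_ny-\gamma\rVert<\psi(n)$ into a condition on the \emph{shifted} target $T\gamma \pmod 1$, so the limsup set is not invariant under the inflation. This is precisely why the paper works with the planar set $\overline{W}_\alpha=\{(x,y):\lVert q_ny-x\rVert<n^{-\alpha}\text{ i.o.}\}$, in which both coordinates are scaled simultaneously, and why it first proves positive measure for a larger exponent $\alpha'\in(\alpha,1)$ so that the dilation factor $T$ is absorbed via $T n^{-\alpha'}\le n^{-\alpha}$ for large $n$; Fubini then transfers full planar measure to a.e.\ fibre. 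If you insist on the fixed-$\gamma$ formulation you must either prove an inhomogeneous zero--one law (not an off-the-shelf statement here) or obtain the second moment with constant $1+o(1)$ on dyadic blocks, which again forces the gcd-sum analysis rather than your $|q_n-q_m|^{-1}$ mechanism. Your final Fubini step itself is fine.
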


We should remark that the growth condition in Theorem \ref{thm:measure} can be relaxed to $|q_n - q_m| > c |n-m|^{\frac{1}{\tau}} (\log |n-m|)^{1+\varepsilon}$ and similarly in Theorem \ref{thm:Lmeasure}. As it will be apparent in the proofs, all that matters is that a certain series is convergent.

We are not able to push the results for $\alpha < 1$ any further,
and we are not able to say anything about the measure of
$W_{y,\alpha}$ when $\alpha = 1$. Note however that the result of
Chow and Technau \cite[Theorem~1.5]{chow2023dispersion} implies
that $W_{y,\alpha} = [0,1)$ for $\alpha = 1$ and for Lebesgue
almost all $y$, provided the sequence $q_n$ is lacunary.  However
as already mentioned, for $\alpha > 1$ the first Borel--Cantelli
lemma immediately implies that the set $W_{y,\alpha}$ is a
Lebesgue null set, and it is of interest to calculate its
Hausdorff dimension. We are able to get a precise result valid
almost surely in the case of lacunary sequences.

\begin{thm}
  \label{thm:Hdim}
  Let $\mu$ be a probability measure on $\mathbb{R}$ with positive
  Fourier dimension, and let $(q_n)_{n = 1}^\infty$ be a lacunary
  sequence of integers. For any $\alpha \ge 1$, for $\mu$-almost
  all $y$,
  \[
    \dimh (W_{y,\alpha}) = \frac{1}{\alpha}.
  \]
  In fact, the set belongs to Falconer's class
  $\mathcal{G}^\frac{1}{\alpha} ([0,1))$.
\end{thm}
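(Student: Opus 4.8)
The plan is to prove Theorem~\ref{thm:Hdim} in two halves: an upper bound $\dimh(W_{y,\alpha}) \le \frac{1}{\alpha}$ which holds for \emph{every} $y$ and uses only a covering argument, and a lower bound which places $W_{y,\alpha}$ in the Falconer class $\mathcal{G}^{1/\alpha}([0,1))$ for $\mu$-almost every $y$ and uses the discrepancy estimate from \cite{MR3133988} together with Lemma~\ref{lem:perssonreeve}. The upper bound is entirely routine: for each $N$ the set $\{\gamma : \Vert q_n y - \gamma\Vert < n^{-\alpha}\ \text{for some}\ n \ge N\}$ is covered by the intervals $(q_n y - n^{-\alpha}, q_n y + n^{-\alpha})$ for $n \ge N$ (reduced mod $1$), and $\sum_{n\ge N} (2n^{-\alpha})^s < \infty$ as soon as $s\alpha > 1$, i.e.\ $s > 1/\alpha$; letting $N \to \infty$ gives $\mathcal{H}^s(W_{y,\alpha}) = 0$ for all such $s$, hence $\dimh(W_{y,\alpha}) \le 1/\alpha$. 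Note this step does not use lacunarity or positive Fourier dimension at all.

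For the lower bound I would set up a Cantor-type construction inside $[0,1)$ at scales governed by the blocks of the lacunary sequence. Because $(q_n)$ is lacunary, say $q_{n+1}/q_n \ge \theta > 1$, and $\mu$ has positive Fourier dimension, \cite[Corollary~7]{MR3133988} gives for $\mu$-almost every $y$ a discrepancy bound for the finite sequence $(q_n y \bmod 1)_{n=1}^{N}$ of the form $D_N(q_n y) = O(N^{1-\eta})$ for some $\eta = \eta(\theta, \dim_F \mu) > 0$ (possibly up to logarithmic factors; these are harmless). The key consequence is that for any dyadic-type subinterval $I \subseteq [0,1)$ of length comparable to $N^{-\beta}$ (for suitable $\beta$), the number of $n \le N$ with $q_n y \in I$ is at least $N \lambda(I) - D_N(q_n y) \gg N\lambda(I)$, so in particular such $n$ exist and the associated approximation intervals $(q_n y - n^{-\alpha}, q_n y + n^{-\alpha})$ land inside (a slightly enlarged) $I$ with $n$ of the right order of magnitude. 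This lets me, at each stage of the Cantor construction, replace a generation-$k$ interval by many well-separated generation-$(k+1)$ subintervals of length $\asymp n^{-\alpha}$ coming from genuine hits of the sequence — thereby building a Cantor set $K \subseteq W_{y,\alpha}$ (the infinitely-often condition being guaranteed since the construction recurses through arbitrarily large $n$), with branching and scale ratios that make $\dimh K$ arbitrarily close to $1/\alpha$.

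To upgrade "$\dimh W_{y,\alpha} \ge 1/\alpha$'' to "$W_{y,\alpha} \in \mathcal{G}^{1/\alpha}([0,1))$'' I would invoke Lemma~\ref{lem:perssonreeve}, which (in the style of Persson--Reeve) converts a quantitative mass-distribution/large-intersection estimate for a limsup set of the form $\limsup_n B(q_n y \bmod 1, n^{-\alpha})$ into membership in Falconer's class. Concretely, the lemma should require that for a suitable probability measure $\nu$ (or family thereof) one has a bound like $\nu(B(x,r)) \lesssim r^{1/\alpha}$ together with the hit-density estimate above showing that balls of the approximating sequence are quantitatively equidistributed with respect to $\nu$ at all scales; since $\mathcal{G}^s([0,1))$ is closed under similarities and countable intersections, the $\limsup$ structure is exactly what the lemma is designed to handle. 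I would verify the hypotheses of Lemma~\ref{lem:perssonreeve} using the discrepancy bound and conclude.

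The main obstacle I anticipate is making the discrepancy input interact cleanly with the \emph{scales}: the discrepancy bound controls $q_n y$ for $n \le N$ down to intervals of length roughly $D_N/N \asymp N^{-\eta}$, but the approximation radii $n^{-\alpha}$ for $n \asymp N$ are of size $N^{-\alpha}$, which for $\alpha > 1$ is \emph{much smaller} than $N^{-\eta}$ (since $\eta < 1$). So one cannot directly assert that individual approximation intervals are equidistributed; instead one must argue at the coarser scale $N^{-\eta}$ that each such interval contains $\gg N^{1-\eta}$ of the points $q_n y$ with $n \le N$, and then use lacunarity to ensure these points are separated by at least $\asymp 1/q_N$ (so they are genuinely distinct and numerous), and only afterwards pass to the fine scale $n^{-\alpha}$ around each such point for the next Cantor generation. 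Balancing the branching number against the scale ratio $n^{-\alpha}$ to extract dimension exactly $1/\alpha$ — rather than something like $\eta/\alpha$ — is the delicate bookkeeping; the resolution is that the Cantor construction advances $n$ geometrically (so passing from generation $k$ to $k+1$ multiplies the relevant $n$ by a large constant), which decouples the coarse equidistribution scale from the fine placement scale and allows the Hausdorff dimension of the resulting set to approach $1/\alpha$.
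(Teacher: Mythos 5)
Your upper bound is fine and is exactly the paper's Lemma~\ref{lem:upper}. The problem is the lower bound: the only quantitative inputs you allow yourself are the almost-sure discrepancy bound $D_N((q_ny)) = O(N^{1-\eta})$ and a separation of the points $q_ny$ of order $1/q_N$ coming from lacunarity, and these cannot produce dimension $1/\alpha$. The discrepancy bound only detects hits in intervals of length $\gg N^{-\eta}$, while your next-generation intervals have length $\asymp N^{-\alpha}\ll N^{-\eta}$. In your proposed fix ("advance $n$ geometrically"), passing from the block $(N,2N]$ to $(2N,4N]$ you would need points of the new block inside a current interval of length $\asymp N^{-\alpha}$; the main term there is $\lesssim N^{1-\alpha}\le 1$ while the discrepancy error is $N^{1-\eta}\gg 1$, so the bound guarantees no hits at all. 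If instead you enlarge the blocks until the discrepancy does bite, you are forced to take $N_{k+1}\gtrsim N_k^{\alpha/\eta}$, and the standard bookkeeping (branching $\approx N_{k+1}\ell_k$, new scale $\approx N_{k+1}^{-\alpha}$) yields a Cantor set of dimension about $\frac{1-\eta}{\alpha-\eta}<\frac{1}{\alpha}$ --- i.e.\ you land in the territory of Theorem~\ref{thm:Hdim2} (lower bound $\eta/\alpha$), not of Theorem~\ref{thm:Hdim}. Lacunarity used only as a separation statement does not help: the separation $\lVert(q_l-q_k)y\rVert$ can be exponentially small in $N$, which is why Theorem~\ref{thm:extraassumption} is not applicable here either. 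For the same reason the Frostman-type bound $\nu(B(x,r))\lesssim r^{1/\alpha}$ you want to feed into Lemma~\ref{lem:perssonreeve} is precisely what discrepancy cannot deliver at exponent $1/\alpha$; and a Cantor subset by itself gives no membership in $\mathcal{G}^{1/\alpha}([0,1))$, so the large-intersection claim is also unsupported in your scheme.

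The paper's lower bound does not use discrepancy at all; it is a second-moment (energy) argument averaged over $y$. One takes $\mu_N$ uniform on $\bigcup_{k=N+1}^{2N}B(q_ky,(2N)^{-\alpha})$, writes $I_t(\mu_N)=c_0\int|\hat{\mu}_N(\xi)|^2|\xi|^{t-1}\,\mathrm{d}\xi$, and integrates in $y$ against $\mu$. The Fourier decay $|\hat{\mu}(\xi)|\lesssim\min\{1,|\xi|^{-\eta}\}$ turns the double sum over $k,l$ into $\sum_{k,l}\hat{\mu}((q_k-q_l)\xi)$, and lacunarity is used as a \emph{pair-correlation} input: the differences $|q_k-q_l|$ grow so fast that this sum is $O(N)$ for $|\xi|\ge 1$ (rather than the trivial $O(N^2)$). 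This gives $\int I_t(\mu_N)\,\mathrm{d}\mu\le c_4+c_5N^{\alpha t-1}$, bounded for every $t<1/\alpha$; hence for $\mu$-a.e.\ $y$ there is a subsequence along which $I_t(\mu_{N_k})$ stays bounded, and Lemma~\ref{lem:perssonreeve} (with $\mu_N\to\lambda$ weakly) yields both $\dimh W_{y,\alpha}\ge t$ and the large-intersection property, whence membership in $\mathcal{G}^{1/\alpha}([0,1))$ by letting $t\uparrow 1/\alpha$. Without this averaged energy/pair-correlation estimate (or an equivalent replacement), your construction stalls at exponents strictly below $1/\alpha$, so the proposal as written has a genuine gap at its central step.
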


For non-lacunary sequences $(q_n)_{n=1}^\infty$, our results are
less precise. Nonetheless, the problem of studying the
distribution of orbits of fractional parts of $q_ny$ for such
sequences has attracted quite a bit of attention. For instance,
if the sequence $(q_n)_{n=1}^\infty$ contains all numbers of the
form $2^i3^j$ in increasing order, Furstenberg \cite{MR213508}
famously showed that orbits are either finite or dense and
conjectured that dense orbits are uniformly
distributed. Similarly, initiated by the work of Rudnick and
Sarnak \cite{MR1628282}, the distribution of sequences of the
form $(n^d y)_{n=1}^\infty$ has attracted a considerable amount
of attention. While we do not claim to solve any of the famous
and important open problems about the distribution of these
sequences, we will prove the following.

\begin{thm}
  \label{thm:Hdim2}
  Let $(x_n)_{n=1}^\infty$ be a sequence in $[0,1)$ with
  $D_N((x_n)) = O(N^{1-\eta})$ for some $\eta \in (0,1)$. Then the
  set
  \[
    W = \{\, \gamma \in [0,1) : \Vert x_n - \gamma \Vert <
    n^{-\alpha} \text{ infinitely often}\,\}
  \]
  satisfies that
  $\frac{\eta}{\alpha} \le \dimh W \le \frac{1}{\alpha}$ for
  $\alpha \geq 1$.
\end{thm}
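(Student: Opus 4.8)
The plan is to split into an upper bound and a lower bound, the latter being the substantial part. For the upper bound, I would use the natural cover of $W$: for each large $N$, the set is covered by the intervals $(x_n - n^{-\alpha}, x_n + n^{-\alpha})$ for $n \ge N$, which have lengths $2n^{-\alpha}$. For any $s > 1/\alpha$ the series $\sum_{n \ge N} (2 n^{-\alpha})^s = \sum_{n\ge N} 2^s n^{-\alpha s}$ converges, and its tail tends to $0$, so $\mathcal{H}^s(W) = 0$ and hence $\dimh W \le 1/\alpha$. This needs no discrepancy information at all.

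For the lower bound I would invoke the Hausdorff dimension lemma referred to in the introduction as Lemma~\ref{lem:perssonreeve} (the Persson–Reeve type mass distribution / local structure lemma from \cite{MR3341452}). The idea is to build, for a target $s < \eta/\alpha$, a Cantor-like subset of $W$ together with a measure on it of finite $s$-energy, using the discrepancy hypothesis to guarantee that at each scale there are enough points $x_n$ landing in a given interval. Concretely, fix $\gamma$ and a dyadic-type scale $M_k$; the number of $n$ with $N_k < n \le N_{k+1}$ such that $x_n$ lies in a fixed subinterval $J$ of length $|J|$ is, by the definition of $D_N$, at least $|J|\,(N_{k+1}-N_k) - 2 D_{N_{k+1}}((x_n)) \ge |J|(N_{k+1}-N_k) - O(N_{k+1}^{1-\eta})$. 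Choosing the block lengths $N_{k+1} - N_k$ growing fast enough relative to $N_{k+1}^{1-\eta}$ and relative to the scale at which we are subdividing, this count is positive and in fact of the expected order $|J|(N_{k+1}-N_k)$; each such $n$ contributes an interval of radius $n^{-\alpha} \asymp N_{k+1}^{-\alpha}$ around a point that is $\delta$-dense (for $\delta \asymp N_{k+1}^{-\alpha}$ times the discrepancy correction) in $J$. Iterating across blocks $k$ produces nested families of intervals whose limiting set is contained in $W$, and the counting estimate gives the branching numbers needed to feed into Lemma~\ref{lem:perssonreeve} to conclude $\dimh W \ge \eta/\alpha$.

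The technical heart — and the main obstacle — is the bookkeeping that makes the two competing scales compatible: at block $k$ we want to place, inside each interval $J$ of the level-$k$ construction, many of the radius-$N_{k+1}^{-\alpha}$ intervals coming from the points $x_n$, $N_k < n \le N_{k+1}$, and for this the set $\{x_n : N_k < n \le N_{k+1}\}$ must be dense in $[0,1)$ at scale comparable to $|J|$ while the error term $O(N_{k+1}^{1-\eta})$ in the discrepancy must be negligible compared to the main term $|J| (N_{k+1}-N_k)$. Since $|J| \asymp N_k^{-\alpha} \le N_{k+1}^{-\alpha}$ shrinks from the previous step, one needs $N_{k+1} - N_k \gg N_{k+1}^{\alpha} \cdot N_{k+1}^{1-\eta}$-type inequalities, i.e. $N_{k+1}$ must dominate a fixed power of $N_k$; tracking the resulting ratios $\log(\text{branching at level }k) / \log(1/\text{scale at level }k)$ and checking that the liminf is $\ge \eta/\alpha$ is where the exponent $\eta/\alpha$ actually emerges. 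The loss from $1/\alpha$ down to $\eta/\alpha$ is precisely the price of only knowing that the points are $D_N = O(N^{1-\eta})$-equidistributed rather than perfectly spread, so this trade-off in the recursion is unavoidable with this method.
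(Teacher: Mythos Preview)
Your upper bound matches the paper's Lemma~\ref{lem:upper} exactly. For the lower bound you have guessed wrong about the content of Lemma~\ref{lem:perssonreeve}: it is not a mass-distribution principle for a single Cantor set but a criterion for $\limsup$ sets. It states that if probability measures $\mu_N$ with $\supp\mu_N\subset E_N\subset[0,1]$ converge weakly to $\lambda$ and have uniformly bounded $t$-energy, then $\dimh(\limsup E_N)\ge t$. The paper applies this at a \emph{single scale}: $\mu_N$ is normalised Lebesgue measure on $E_N=\bigcup_{N<n\le 2N}B(x_n,(2N)^{-\alpha})$, and the discrepancy hypothesis gives directly the Frostman bound $\mu_N(I)\le c\,|I|^{\eta/\alpha}$ for every interval $I$ (split into the cases $|I|<N^{-\alpha}$ and $|I|\ge N^{-\alpha}$, the latter using that at most $3N|I|+cN^{1-\eta}$ of the points $x_n$ with $N<n\le 2N$ lie in $3I$). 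This yields bounded $t$-energy for every $t<\eta/\alpha$, and weak convergence $\mu_N\to\lambda$ follows from $D_N=o(N)$.

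Your nested Cantor construction is a genuinely different route. It can be made to work --- it is essentially the ubiquitous-systems argument --- but it is heavier than what the paper does: you would need $N_{k+1}\gg N_k^{\alpha/\eta}$ to beat the discrepancy error inside each parent interval, and you must also thin the children to a well-separated subfamily (a step you do not mention), since the lower bound on the count of $x_n$ in $J$ does not by itself rule out many of them clustering at one point. The paper's one-scale approach bypasses all the nesting, separation, and super-exponential recursion by pushing the $\limsup$ into Lemma~\ref{lem:perssonreeve}, and it delivers membership in the large-intersection class $\mathcal{G}^{\eta/\alpha}([0,1))$ as a bonus.
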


In \cite{MR3133988}, it is shown that for an increasing sequence
of integers $(q_n)_{n=1}^\infty$, the sequence
$(x_n)_{n=1}^\infty = (\{q_n y\})_{n=1}^\infty$ satisfies the
required discrepancy estimate for $\mu$-almost every $y$,
whenever $\mu$ is a measure of positive Fourier dimension. More
precisely, it follows from the proof of Corollary~7 in
\cite{MR3133988} that if $|\hat{\mu} (\xi)| = O(|\xi|^{-2\eta})$,
then $D_N((x_n)) = O(N^{1-\eta'})$ for any $\eta' < \eta$.  Thus,
we immediately have the following corollary.

\begin{cor}
  \label{cor:Hdim}
  Let $\mu$ be a probability measure on $[0,1]$ with Fourier
  decay $|\hat{\mu} (\xi)| = O(|\xi|^{-2\eta})$, and let
  $(q_n)_{n = 1}^\infty$ be an increasing sequence of
  integers. For any $\alpha \geq 1$, for $\mu$-almost all $y$,
  \[
    \frac{\eta}{\alpha} \le \dimh W_{y,\alpha} \le
    \frac{1}{\alpha}.
  \]
\end{cor}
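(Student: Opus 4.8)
The plan is to deduce Corollary~\ref{cor:Hdim} directly from Theorem~\ref{thm:Hdim2} combined with the discrepancy estimate quoted just above it from \cite{MR3133988}. Since $\mu$ is a probability measure on $[0,1]$ with $|\hat\mu(\xi)| = O(|\xi|^{-2\eta})$, the (stated) consequence of the proof of \cite[Corollary~7]{MR3133988} is that for $\mu$-almost every $y$ the sequence $x_n = \{q_n y\}$ satisfies $D_N((x_n)) = O(N^{1-\eta'})$ for every $\eta' < \eta$. Fix such a $y$. Then Theorem~\ref{thm:Hdim2}, applied to this sequence with the parameter $\eta'$ in place of $\eta$, gives
\[
  \frac{\eta'}{\alpha} \le \dimh W_{y,\alpha} \le \frac{1}{\alpha}
\]
for each $\alpha \ge 1$ and each $\eta' < \eta$. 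One small point to check is that $W$ in Theorem~\ref{thm:Hdim2} is exactly $W_{y,\alpha}$: by definition $\|q_n y - \gamma\| = \|\{q_n y\} - \gamma\| = \|x_n - \gamma\|$ since $\gamma \in [0,1)$, so the two sets coincide.

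The only remaining step is to upgrade the lower bound from $\eta'/\alpha$ (for all $\eta' < \eta$) to $\eta/\alpha$. Since $\dimh W_{y,\alpha} \ge \eta'/\alpha$ holds for every $\eta' < \eta$, taking the supremum over $\eta' < \eta$ yields $\dimh W_{y,\alpha} \ge \eta/\alpha$, which is the desired bound. This is where one must be mildly careful with the order of quantifiers: we want a \emph{single} full-measure set of $y$ that works for all $\alpha$ simultaneously, so I would first intersect, over a countable sequence $\eta_k \uparrow \eta$, the full-measure sets on which $D_N = O(N^{1-\eta_k})$ holds, obtaining one full-measure set $Y$; for $y \in Y$ and any $\alpha \ge 1$ we then get $\dimh W_{y,\alpha} \ge \sup_k \eta_k/\alpha = \eta/\alpha$. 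The countable intersection is still of full measure, so this causes no loss.

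The main (and essentially only) obstacle is purely bookkeeping: making sure the dependence of the discrepancy exponent on $\eta'$ does not force a loss in the final statement, and that the ``$\mu$-almost all $y$'' set can be chosen uniformly in $\alpha$. Both are handled by the countable-intersection argument above, since the values $\alpha \ge 1$ and the approximating exponents $\eta_k$ can be dealt with through countable families. No new analysis beyond Theorem~\ref{thm:Hdim2} is needed, so the proof is genuinely a short corollary.
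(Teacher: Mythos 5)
Your proposal is correct and follows essentially the same route as the paper: the corollary is obtained by feeding the discrepancy estimate $D_N((\{q_ny\})) = O(N^{1-\eta'})$, $\eta' < \eta$, from the proof of Corollary~7 in \cite{MR3133988} into Theorem~\ref{thm:Hdim2} and letting $\eta' \uparrow \eta$. Your explicit countable-intersection bookkeeping over $\eta_k \uparrow \eta$ (and the remark that the limsup sets coincide) is exactly the detail the paper leaves implicit when it says the corollary follows immediately.
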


Note in particular that although the Fourier dimension of the
Lebesgue measure $\lambda$ is 1, we have the stronger Fourier
decay $|\hat{\lambda} (\xi)| = O (|\xi|^{-1})$. (This is because
in the definition \eqref{def:Fdim} of the Fourier dimension, the
supremum is taken over the interval $[0,1]$.) Hence, in the case
of Lebesgue measure, Corollary~\ref{cor:Hdim} implies that
$\frac{1}{2\alpha} \leq \dimh W_{y,\alpha} \leq \frac{1}{\alpha}$
for Lebesgue almost every $y$.

Theorem~\ref{thm:Hdim2} does not give a precise Hausdorff
dimension of the set $W$, but under an additional assumption on
the sequence $(x_n)_{n=1}^\infty$, we do in fact get the exact
dimension. If we define the quantity,
\[
  d_N ((x_n)) = \inf_{1 \leq k < l \leq N} \lVert x_k - x_l \rVert,
\]
we are able to prove the following. 

\begin{thm}
  \label{thm:extraassumption}
  Let $(x_n)$ be a sequence in $[0,1)$ with
  $D_N ((x_n)) \leq c N^{1-\eta}$ and
  $d_N ((x_n)) \geq c N^{-\beta}$, where $0 < \eta < 1$ and
  $1 \leq \beta \leq \eta (\alpha - 1) + 1$. Put
  \[
    W = \{\, y \in [0,1) : \lVert x_n - y \rVert < n^{-\alpha}
    \text{ infinitely often} \,\}.
  \]
  Then $\dimh W = \frac{1}{\alpha}$.
\end{thm}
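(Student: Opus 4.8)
\emph{The upper bound is free.} The hypothesis $D_N((x_n)) \le cN^{1-\eta}$ is precisely the hypothesis of Theorem~\ref{thm:Hdim2}, so $\dimh W \le \frac1\alpha$ is immediate (this half in fact needs no hypothesis: cover $W$ by the tails $\bigcup_{n\ge M}B(x_n,n^{-\alpha})$ and let $M\to\infty$). The content is therefore the matching lower bound $\dimh W \ge \frac1\alpha$, and the role of the separation hypothesis $d_N((x_n)) \ge cN^{-\beta}$ is to push the lower bound $\frac\eta\alpha$ of Theorem~\ref{thm:Hdim2} up to $\frac1\alpha$.

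\emph{The construction.} The plan is to fix $s<\frac1\alpha$, build a Cantor set $\mathcal C\subseteq W$ with $\dimh\mathcal C\ge s$, and let $s\uparrow\frac1\alpha$. I would construct nested families $\mathcal I_0\supseteq\mathcal I_1\supseteq\cdots$ of pairwise disjoint intervals level by level: given a level-$j$ interval $J=B(x_m,r_j)$, choose an integer $N_j$ enormous compared with everything chosen so far — in particular $N_j\ge r_j^{-1/\eta}$ — and declare the children of $J$ to be the balls $B(x_n,N_j^{-\alpha})$ with $N_j\le n\le 2N_j$ and $x_n$ lying well inside $J$. Since $N_j^{-\alpha}\le n^{-\alpha}$ for such $n$, each child lies inside $B(x_n,n^{-\alpha})$, so every point of $\mathcal C=\bigcap_j\bigcup\mathcal I_j$ belongs to $B(x_n,n^{-\alpha})$ for arbitrarily large $n$, i.e.\ $\mathcal C\subseteq W$. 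The discrepancy bound furnishes $\asymp N_jr_j$ admissible values of $n$ — the error $D_{2N_j}=O(N_j^{1-\eta})$ being negligible exactly because $N_j\ge r_j^{-1/\eta}$ — so the branching numbers $M_j\asymp N_jr_j$ are large, while $d_{2N_j}\ge c(2N_j)^{-\beta}$ together with $\beta\le\alpha$ (which follows from $\beta\le\eta(\alpha-1)+1$ and $\eta<1$) makes the children pairwise disjoint, in fact $\gtrsim N_j^{-\beta}$-separated. Letting the $N_j$ grow fast enough that $\log(1/\ell_j)$ dwarfs $\log(1/\ell_{j-1})+\dots+\log(1/\ell_1)$, where $\ell_j=2N_{j-1}^{-\alpha}$ is the common diameter at level $j$, one checks $\log(\#\mathcal I_j)/\log(1/\ell_j)\to\frac1\alpha$.

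\emph{The Frostman bound.} It then remains to put the natural probability measure $\nu$ on $\mathcal C$ (mass $1/\#\mathcal I_j$ on each level-$j$ interval, distributed uniformly downwards) and to verify $\nu(B(x,\rho))\le C\rho^{s}$ for all small $\rho$; this is the heart of the matter. Writing $\rho=N_j^{-t}$ at the appropriate level $j$, one bounds the number of level-$(j+1)$ intervals meeting $B(x,\rho)$ by the minimum of $N_j^{1-t}+O(N_j^{1-\eta})$ (discrepancy applied to the block $[N_j,2N_j]$) and $N_j^{\beta-t}+O(1)$ (their $\gtrsim N_j^{-\beta}$-separation), the $\nu$-mass of each such interval being $\le N_j^{-\alpha s}$ by the dimension count above; for $\rho\in(\ell_{j+1},\ell_j)$ one also uses that $B(x,\rho)$ meets only $O(1)$ intervals at every level $\le j$ because the within-parent separations $\gtrsim N_i^{-\beta}$ exceed $\ell_j$. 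The worst scale turns out to be $t=\beta-1+\eta$, where the two estimates combine to give $\nu(B(x,\rho))\lesssim\rho^{s}$ for $s$ up to $\frac1\alpha$ exactly when $\beta-1+\eta\le\alpha\eta$, i.e.\ $\beta\le\eta(\alpha-1)+1$ — the hypothesis. Given the Frostman bound, Lemma~\ref{lem:perssonreeve} (in effect the mass distribution principle) yields $\mathcal H^{s}(\mathcal C)>0$, hence $\dimh W\ge\dimh\mathcal C\ge s$, and letting $s\uparrow\frac1\alpha$ completes the proof.

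I expect the multiscale Frostman estimate to be the real obstacle: one must use the discrepancy bound to keep the points $x_n$ from clustering inside a parent interval and, simultaneously, the separation bound to keep too many child intervals from crowding into a single small ball, and it is the tension between these two that pins down the admissible range $1\le\beta\le\eta(\alpha-1)+1$. Minor nuisances will be the degenerate case $\alpha=\beta=1$ (thin the children by a bounded factor to keep them disjoint) and the bookkeeping that controls how many intervals at each level a small ball can meet.
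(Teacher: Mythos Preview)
Your plan is a valid route to the result, but it is not the one the paper takes. The paper avoids building a Cantor subset altogether: it defines the same single-level measures $\mu_N$ as in the proof of Theorem~\ref{thm:Hdim2} (uniform mass on $\bigcup_{N<n\le 2N}B(x_n,(2N)^{-\alpha})$) and simply upgrades the Frostman exponent from $\eta/\alpha$ to $1/\alpha$ by splitting into three ranges of $|I|$ --- namely $|I|\le N^{-\beta}$, $N^{-\beta}\le|I|\le N^{-\eta\alpha}$, and $|I|\ge N^{-\eta\alpha}$ --- using the separation bound in the first two ranges and the discrepancy bound in the third. Once $\mu_N(I)\le c_3|I|^{1/\alpha}$ is established uniformly in $N$, the $t$-energy is bounded exactly as in Theorem~\ref{thm:Hdim2} and Lemma~\ref{lem:perssonreeve} finishes the job. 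This is considerably shorter than your nested construction, because the Persson--Reeve machinery absorbs the ``limsup'' structure that you are rebuilding by hand with the Cantor set.

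Your approach, by contrast, is the classical one: build $\mathcal C\subseteq W$ level by level and verify a Frostman bound for the natural measure on $\mathcal C$ via the mass distribution principle. The multiscale analysis you sketch (discrepancy at coarse scales, separation at fine scales, with the crossover governed by $\beta\le\eta(\alpha-1)+1$) is exactly parallel to the paper's three-case split, so the core insight is the same; you are just packaging it inside a recursive construction rather than a one-shot energy estimate. One small slip: at the end you invoke Lemma~\ref{lem:perssonreeve} ``(in effect the mass distribution principle)'' --- but these are different tools. Having built a single measure $\nu$ on the fixed set $\mathcal C$ with $\nu(B(x,\rho))\lesssim\rho^s$, what you want is the mass distribution principle directly; Lemma~\ref{lem:perssonreeve} is about sequences of measures converging weakly to Lebesgue and is precisely what lets the paper bypass your construction.
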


We get the following corollary to Theorem~\ref{thm:extraassumption}.

\begin{cor}
  \label{cor:lastcor}
  Let $(q_n)_{n=1}^\infty$ be an increasing sequence of
  integers. For any $\alpha \geq 3$ and $\lambda$-almost every
  $y$, we have
  \[
    \dimh W_{y,\alpha} = \frac{1}{\alpha}.
  \]
\end{cor}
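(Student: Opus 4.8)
The plan is to deduce this from Theorem~\ref{thm:extraassumption} applied to the sequence $(x_n)_{n=1}^\infty = (\{q_n y\})_{n=1}^\infty$, so the entire task reduces to verifying the two hypotheses of that theorem for $\lambda$-almost every $y$: a discrepancy bound $D_N((x_n)) \le c N^{1-\eta}$ and a minimum-gap bound $d_N((x_n)) \ge c N^{-\beta}$, with parameters $\eta$ and $\beta$ satisfying $0 < \eta < 1$ and $1 \le \beta \le \eta(\alpha-1)+1$. For the discrepancy, I would invoke the discussion already made in the excerpt: since $\lambda$ has Fourier decay $|\hat\lambda(\xi)| = O(|\xi|^{-1})$, the proof of Corollary~7 in \cite{MR3133988} gives $D_N((x_n)) = O(N^{1-\eta'})$ for every $\eta' < \tfrac12$, and hence for $\lambda$-almost every $y$ we may take any fixed $\eta$ slightly below $\tfrac12$.

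The substantive point is the gap bound $d_N((x_n)) = \inf_{1\le k<l\le N} \lVert q_k y - q_l y\rVert \ge c N^{-\beta}$. Here I would argue as follows. For fixed integers $k \ne l$ with $1 \le k, l \le N$, the set of $y \in [0,1)$ with $\lVert (q_k - q_l) y \rVert < \delta$ has Lebesgue measure at most $2\delta$ (it is a union of $|q_k-q_l|$ intervals each of length $2\delta/|q_k-q_l|$). Taking a union over the at most $N^2$ pairs $(k,l)$, the set of $y$ for which $d_N((x_n)) < \delta_N$ has measure at most $2 N^2 \delta_N$. Choosing $\delta_N = N^{-\beta}$ with $\beta$ large enough that $\sum_N N^2 N^{-\beta} < \infty$, i.e.\ $\beta > 3$, the Borel--Cantelli lemma shows that for $\lambda$-almost every $y$ there is a constant $c = c(y) > 0$ with $d_N((x_n)) \ge c N^{-\beta}$ for all $N$. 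This is precisely where the hypothesis $\alpha \ge 3$ enters: the admissible range $\beta \le \eta(\alpha-1)+1$ becomes, for $\eta$ close to $\tfrac12$, roughly $\beta \le \tfrac{\alpha+1}{2}$, and one needs this upper endpoint to exceed the value $\beta > 3$ forced by convergence of the Borel--Cantelli series; $\tfrac{\alpha+1}{2} > 3$ is equivalent to $\alpha > 5$, so in fact a little more care is needed — one should instead note that the Borel--Cantelli argument only needs $\beta$ as small as any fixed value with $2N^2 N^{-\beta}$ summable along a sparse subsequence of $N$, or more simply apply the counting over dyadic ranges $N \in [2^j, 2^{j+1})$, which replaces the exponent $2$ in $N^2$ by the requirement $\beta > 2$, i.e.\ one can take any $\beta \in (2, \eta(\alpha-1)+1]$, and for $\alpha \ge 3$ and $\eta$ sufficiently close to $\tfrac12$ this interval is nonempty since $\eta(\alpha-1)+1 \to \tfrac{\alpha+1}{2} \ge 2$.

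The main obstacle is thus the bookkeeping in the last step: making the Borel--Cantelli exponent and the interval $[1, \eta(\alpha-1)+1]$ from Theorem~\ref{thm:extraassumption} overlap, which requires organising the union bound over pairs $(k,l)$ efficiently (dyadic blocks in $N$) and choosing $\eta$ close enough to the Fourier-decay-dictated value $\tfrac12$. Once a valid pair $(\eta, \beta)$ is fixed for $\lambda$-almost every $y$, Theorem~\ref{thm:extraassumption} applies verbatim — with $W = W_{y,\alpha}$ since $\lVert \{q_n y\} - y' \rVert = \lVert q_n y - y' \rVert$ — and yields $\dimh W_{y,\alpha} = \tfrac{1}{\alpha}$, completing the proof.
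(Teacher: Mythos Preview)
Your overall strategy coincides with the paper's: verify the hypotheses of Theorem~\ref{thm:extraassumption} for $\lambda$-almost every $y$, with $\eta$ any value below $\tfrac12$ coming from the discrepancy estimate, and a Borel--Cantelli argument to get $d_N((q_ny)) \ge c N^{-\beta}$ for any $\beta > 2$. Your dyadic-subsequence fix is valid and does recover $\beta > 2$, but the paper organises this step more cleanly and avoids your initial stumble to $\beta > 3$: it enumerates the set of differences $\Delta = \{|q_l - q_k| : k \ne l\}$ as $(n_j)$ with the property that each difference arising from indices $\le l$ appears at some $j \le l^2$, and applies Borel--Cantelli directly to the events $\{\lVert n_j y\rVert < j^{-\beta/2}\}$; summability then requires only $\beta/2 > 1$, and for $k < l \le N$ one has $j \le N^2$, giving $\lVert (q_k - q_l)y\rVert > c j^{-\beta/2} \ge c N^{-\beta}$ in one line.

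There is one genuine gap in your proposal. You claim that for $\alpha \ge 3$ and $\eta$ close to $\tfrac12$ the interval $(2,\ \eta(\alpha-1)+1]$ is nonempty because $\eta(\alpha-1)+1 \to \tfrac{\alpha+1}{2} \ge 2$. But for $\alpha = 3$ this limit equals $2$, and since $\eta < \tfrac12$ strictly, one always has $\eta(\alpha-1)+1 < 2$ when $\alpha = 3$, so the interval is empty and Theorem~\ref{thm:extraassumption} cannot be invoked directly. The paper handles this by first establishing $\dimh W_{y,\alpha} = \tfrac1\alpha$ for every $\alpha > 3$, and then obtaining $\alpha = 3$ by monotonicity: since $W_{y,3} \supseteq W_{y,\alpha}$ for $\alpha > 3$, one has $\dimh W_{y,3} \ge \tfrac1\alpha$ for all $\alpha > 3$, hence $\dimh W_{y,3} \ge \tfrac13$, and the upper bound is Lemma~\ref{lem:upper}. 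You should add this limiting step.
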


A proof of Corollary~\ref{cor:lastcor} is in the end of
Section~\ref{sec:proofs}.

\section{Proofs}
\label{sec:proofs}

We first prove Theorem~\ref{thm:measure}.

\begin{proof}[Proof of Theorem~\ref{thm:measure}]
  Let $B (x,r) = \{\, y \in \mathbb{R} : \lVert x - y \rVert
  < r \,\}$. We put
  \[
    A_k = A_k (y) = \{\, \gamma \in [0,1) : \lVert q_k y - \gamma
    \rVert < k^{-\alpha} \, \} = B(q_k y, k^{-\alpha}) \cap [0,1).
  \]
  Then
  \[
    \lambda (A_k) = 2 k^{-\alpha}.
  \]
  We also have for $k < l$ that
  \begin{equation} \label{eq:intersection}
    \lambda (A_k \cap A_l) \leq 2 l^{-\alpha} \mathbbm{1}_{B(0,
      2k^{-\alpha})} ((q_k - q_l)y)
  \end{equation}
  since
  $\lambda (A_k \cap A_l) \leq \lambda(A_l) = 2 l^{-\alpha}$
  always holds and $A_k \cap A_l = \emptyset$ holds when
  $(q_k y - q_l y) \not \in B(0, 2k^{-\alpha})$.

  We now consider the average
  $\int \lambda (A_k(y) \cap A_l (y)) \, \mathrm{d} \mu
  (y)$. Writing $\mathbbm{1}_{B(0, 2k^{-\alpha})}$ as a Fourier
  series,
  \[
    \mathbbm{1}_{B (0, 2k^{-\alpha})} (y) = \sum_{j \in
      \mathbb{Z}} a_j e^{i 2 \pi j y},
  \]
  we can estimate the Fourier coefficients by
  $|a_j| \leq 1 / |j|$ for $j \neq 0$ and we have
  $a_0 = 4 k^{-\alpha}$.
  This implies that
  \[
    \int \mathbbm{1}_{B(0, 2k^{-\alpha})} ((q_k - q_l)y) \,
    \mathrm{d} \mu (y) = \sum_j a_j \int e^{i 2 \pi j (q_k - q_l)
      y} \, \mathrm{d} \mu \leq a_0 + \sum_{j \neq 1} j^{-1}
    \hat{\mu} (j (q_k - q_l)).
  \]
  Since $\mu$ has positive Fourier dimension with decay rate
  $\tau$, and by the assumption on the sequence $(q_n)$, we have
  $\hat{\mu} (j (q_k - q_l)) \leq \tilde{c}_1 j^{-\tau}
  (l-k)^{-1 - \tau \varepsilon}$. Therefore, we have
  \[
    \int \mathbbm{1}_{B(0, 2k^{-\alpha})} ((q_k - q_l)y) \,
    \mathrm{d} \mu (y) \leq 4 (k^{-\alpha} + c_1 (l-k)^{-1 - \tau
      \varepsilon})
  \]
  for some constant $c_1$. By \eqref{eq:intersection} we then
  have
  \begin{equation} \label{eq:average} \int \lambda(A_k \cap A_l)
    \, \mathrm{d} \mu \leq 8 l^{-\alpha} (k^{-\alpha} + c_1
    (l-k)^{-1 - \tau\varepsilon}).
  \end{equation}

  Take $m < n$ and put
  \[
    S_{m,n} (y) = S_{m,n} = \sum_{k=m}^n \lambda (A_k), \qquad
    C_{m,n} (y) = \sum_{m\leq k,l \leq n} \lambda (A_k(y) \cap
    A_l (y)),
  \]
  We consider the sets 
  \[
    \Delta_{m,n} (p) = \biggl\{\, y : C_{m,n} (y) > p \int C_{m,n} (y)
    \, \mathrm{d} \mu (y) \,\biggr\}
  \]
  For $p \geq 1$, we have $\mu (\Delta_{m,n} (p)) \leq
  p^{-1}$. Therefore, $\mu (\complement \Delta_{m,n} (p)) \geq 1
  - p^{-1}$ and
  \[
    \complement \Delta_{m,n} (p) = \biggl\{\, y : C_{m,n} (y) \leq p
    \int C_{m,n} (y) \, \mathrm{d} \mu (y) \,\biggr\}.
  \]

  Fix $p > 1$ and take a quickly increasing sequence $n_j$ going
  to infinity in such a way that
  $\frac{n_{j+1}}{n_j} \to \infty$. Put
  $\Delta_j (p) = \Delta_{n_j, n_{j+1}} (p)$. Then
  $G (p) = \limsup \complement \Delta_j (p)$ satisfies
  $\mu (G (p)) \geq 1 - p^{-1}$.

  Take $y \in G (p)$. Then there are infinitely many $j$ for
  which
  \[
    C_{n_j,n_{j+1}} (y) \leq p C_j,
  \]
  where $C_j = \int C_{n_j,n_{j+1}} \, \mathrm{d}\mu$.

  Put $S_j = S_{n_j,n_{j+1}}$ We estimate $C_j$ and $S_j$. By
  \eqref{eq:average} we have
    \begin{align}
      C_j
      & \leq S_j + 2 \sum_{l = n_j + 1}^{n_{j+1}} \sum_{k = n_j}^{l-1}
        8 l^{-\alpha} (k^{-\alpha} + c_1 (l-k)^{-1 -\tau\varepsilon})
        \nonumber \\
      & \leq S_j + 2 \sum_{l = n_j + 1}^{n_{j+1}} 8 l^{-\alpha} \biggl(
        \int_0^l x^{-\alpha} \, \mathrm{d}x + c_2 \biggr) \nonumber \\
      & = S_j + 16 \sum_{l = n_j + 1}^{n_{j+1}} l^{-\alpha} \biggl(
        \frac{1}{1 - \alpha} l^{1 - \alpha} + c_2 \biggr) \nonumber \\
      & \leq S_j + \frac{16}{(1 - \alpha)^2}
        (n_{j+1} + 1)^{2 - 2\alpha} + \frac{16 c_2}{(1 - \alpha)}
        (n_{j+1} + 1)^{1 - \alpha}. \label{eq:est1}
  \end{align}
  We also have
  \begin{equation} \label{eq:est2}
    S_j = \sum_{k = n_j}^{n_{j+1}} 2 k^{-\alpha} \geq 2
    \int_{n_j}^{n_{j+1}} x^{-\alpha} \, \mathrm{d} x = \frac{2}{1
      - \alpha} (n_{j+1}^{1 - \alpha} - n_j^{1 - \alpha}).
  \end{equation}

  For any such $j$, we then have by the very famous and important
  Chung--Erd\H{o}s inequality \cite{MR45327} that
  \[
    \lambda \biggl( \bigcup_{k = n_j}^{n_{j+1}} A_k (y) \biggr)
    \geq \frac{S_{n_j,n_{j+1}}^2}{C_{n_j,n_{j+1}}} \geq
    \frac{S_j^2}{p C_j}.
  \]
  By the estimates \eqref{eq:est1} and \eqref{eq:est2} we have
  that
    \[
    \lim_{j \to \infty} \frac{S_j^2}{C_j} = 
    \frac{1}{4p}.
  \]
  It follows that for any $y \in G (p)$ we have
  $\lambda(W_{y,\alpha}) \geq \frac{1}{4p} > 0$. Hence
  $\lambda (W_{y,\alpha}) > 0$ for any $y \in \bigcup_{p>1} G(p)$
  and $\mu (\bigcup_{p>1} G(p)) = 1$.
\end{proof}

Now, we consider the special case of $\mu = \lambda$ and prove
Theorem~\ref{thm:Lmeasure}. For this, we will use an inflation argument due to Cassels, which in turn needs Lemma~VII.5 from \cite{MR349591}, which we state here. In  \cite{MR349591} the lemma is proved in one dimension, but as remarked in that book, the proof in higher dimensions is identical. We state it here as Lemma \ref{lem:Cassels} in two dimensions, as this is the case we will need.

\begin{lem}
\label{lem:Cassels}
Let $A \subset [0,1)^2$ be a set of positive Lebesgue measure
and let $\epsilon > 0$. There are integers $t_1, t_2$ and $T$
with $0 \le t_1, t_2 < T$ such that the Lebesgue measure of the
set
\[ 
\{(x,y) \in A : t_1/T \le x < (t_1 +1)/T, \;  t_2/T \le y <(t_2 +1)/T\}
\]
is at least $(1-\epsilon)/T^2$.
\end{lem}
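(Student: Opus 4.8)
The plan is to read this off from the Lebesgue density theorem, working with dyadic squares so that the grid alignment required by the statement comes for free. Write $f = \mathbbm{1}_A \in L^1([0,1)^2)$, and for $n \ge 0$ let $\mathcal{F}_n$ be the finite $\sigma$-algebra generated by the $4^n$ half-open dyadic squares
\[
  S^{(n)}_{t_1, t_2} = [t_1 2^{-n}, (t_1+1)2^{-n}) \times [t_2 2^{-n}, (t_2+1)2^{-n}), \qquad 0 \le t_1, t_2 < 2^n,
\]
which tile $[0,1)^2$. For $(x,y) \in [0,1)^2$ write $S_n(x,y)$ for the unique such square containing it; then the conditional expectation is $\mathbb{E}[f \mid \mathcal{F}_n](x,y) = 4^n \lambda\bigl(A \cap S_n(x,y)\bigr)$.

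First I would invoke the martingale convergence theorem (equivalently, the Lebesgue differentiation theorem along the dyadic filtration, which here is elementary) to get $\mathbb{E}[f \mid \mathcal{F}_n] \to f$ Lebesgue-almost everywhere on $[0,1)^2$ as $n \to \infty$. In particular, for almost every $(x,y) \in A$ one has $4^n \lambda\bigl(A \cap S_n(x,y)\bigr) \to 1$. Since $\lambda(A) > 0$, such points form a set of positive measure and hence exist; fix one, say $(x_0, y_0) \in A$. Given $\epsilon > 0$, choose $n$ so large that $4^n \lambda\bigl(A \cap S_n(x_0,y_0)\bigr) > 1 - \epsilon$, and let $t_1, t_2$ be the indices with $S_n(x_0, y_0) = S^{(n)}_{t_1,t_2}$. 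Setting $T = 2^n$ we obtain $0 \le t_1, t_2 < T$ and $\lambda\bigl(A \cap S^{(n)}_{t_1,t_2}\bigr) > (1-\epsilon)/T^2$, which is exactly the assertion.

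There is essentially no serious obstacle here; the one point that needs a little care is the requirement that the grid indices lie in $\{0, \dots, T-1\}$, which is precisely why working with dyadic squares (rather than with a small ball around a density point) is convenient: a generation-$n$ dyadic square automatically has this form with $T = 2^n$ and sits inside $[0,1)^2$ by construction, so no boundary bookkeeping is needed. If one instead wanted the conclusion for every $T$ and not merely powers of two, one would apply the full Lebesgue density theorem to produce a density point $(x_0,y_0) \in (0,1)^2$ of $A$ together with a radius $r_0$ such that $\lambda(A \cap Q) \ge (1 - \tfrac{\epsilon}{2})\lambda(Q)$ for the axis-parallel square $Q$ of side $2r_0$ centred at $(x_0,y_0)$ and contained in $(0,1)^2$, then for $T$ with $1/T \ll r_0$ cover $Q$ up to an error $O\bigl(1/(Tr_0)\bigr)$ by grid squares of side $1/T$, and conclude by an averaging (pigeonhole) argument that at least one of these grid squares carries $A$-density at least $1-\epsilon$. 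Since the intended application (Theorem~\ref{thm:Lmeasure}) only needs the existence of one admissible $T$, the dyadic argument already suffices.
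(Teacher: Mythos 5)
Your proof is correct. One point of comparison: the paper does not actually prove this lemma at all --- it is imported verbatim as Lemma~VII.5 of Cassels' book \cite{MR349591}, where the one-dimensional case is proved (by approximating $A$ from outside by a countable union of intervals, via the definition of measurability, and then an averaging/pigeonhole step over a sufficiently fine grid) and the higher-dimensional case is noted to be identical. Your argument is a genuinely different and self-contained route: the dyadic martingale (equivalently, dyadic Lebesgue density) theorem immediately produces a point of $A$ whose dyadic squares carry $A$-density tending to $1$, and such a square is automatically of the required form $[t_1/T,(t_1+1)/T)\times[t_2/T,(t_2+1)/T)$ with $T=2^n$ and $0\le t_1,t_2<T$. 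The computation $\mathbb{E}[\mathbbm{1}_A\mid\mathcal{F}_n](x,y)=4^n\lambda(A\cap S_n(x,y))$ and the identification of $\bigvee_n\mathcal{F}_n$ with the Borel $\sigma$-algebra (mod null sets) are both right, so the argument is complete. What each approach buys: Cassels' elementary approximation argument avoids any differentiation-type theorem and, in its natural form, gives the conclusion for all sufficiently fine grids, i.e.\ essentially for every large enough $T$; your version restricts $T$ to powers of two, but since the lemma only asserts the existence of some admissible $T$ --- and the inflation argument in the proof of Theorem~\ref{thm:Lmeasure} only uses one such $T$ --- this restriction is harmless, and your closing remark correctly indicates how to recover arbitrary large $T$ from a density point if one ever needed it.
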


\begin{proof}[Proof of Theorem~\ref{thm:Lmeasure}] 
We first note that $|\hat{\lambda} (\xi)| = O (|\xi|^{-1})$, whence by Theorem \ref{thm:measure}, $W_{y,\alpha}$ will have positive Lebesgue measure for any $\alpha < 1$.

We consider the auxiliary set 
\[
  \overline{W}_\alpha = \{ \, (x,y) \in [0,1]^2 : \Vert q_n
  y - x\Vert < n^{-\alpha} \text{ infinitely often } \, \}.
\]
By Theorem~\ref{thm:measure} and Fubini's Theorem, the two
dimensional Lebesgue measure of this set is positive for any
$\alpha < 1$.

Now, fix $\alpha'  \in (\alpha, 1)$. By the above, $\lambda(\overline{W}_{\alpha'}) > 0$. We will prove that this implies that $\lambda(\overline{W}_\alpha) = 1$. 

Let $\epsilon > 0$. We split up the square $[0,1)^2$ into $T^2$ disjoint squares of side length $1/T$, where $T \in \mathbb{N}$ is chosen by Lemma~\ref{lem:Cassels}, so that for one of these boxes, $B$ say, 
\[
\lambda(\overline{W}_{\alpha'} \cap B) > (1-\epsilon) \lambda(B).
\]
Scaling the box by $T^2$ and projecting back onto the $[0,1)^2$ by reducing modulo $1$, we cover the entire unit square, so that
\[
\lambda(T(\overline{W}_{\alpha'} \cap B) \pmod{1}) > (1-\epsilon).
\]
Since $\epsilon > 0$ was arbitrary, it follows that for every pair $(x,y) \in [0,1)^2$ outside of a set of Lebesgue measure $< \epsilon$, there is a point $(x',y') \in \overline{W}_{\alpha'}$ with $(x,y) = T(x',y') \pmod{1}$.

Now, for such a point, 
\[
\Vert q_n y - x \Vert = \Vert T(q_n y' - x') \Vert \le T \Vert q_n y' - x' \Vert.
\]
Since $(x',y') \in \overline{W}_{\alpha'}$, for infinitely many values of $n$, 
\[
T \Vert q_n y' - x' \Vert < T n^{-\alpha'} = T n^{\alpha - \alpha'} n^{-\alpha}.
\]
Since $\alpha' > \alpha$, $T n^{\alpha - \alpha'} \le 1$ for $n$ large enough, since $T$ was fixed. Consequently, $(x,y) \in \overline{W}_{\alpha'}$, so that $\lambda(\overline{W}_{\alpha}) > 1-\epsilon$. Since $\epsilon > 0$ was arbitrary, it follows that $\lambda(\overline{W}_{\alpha}) = 1$.

Now, to conclude we apply Fubini's Theorem once more and find that almost all fibers $W_{y,\alpha}$ are Lebesgue full.
\end{proof}

We now prove Theorem~\ref{thm:Hdim}. In the proof, we will make
use of the following quantity related to discrepancy,
\[
  \tilde{D}_N(x_n) = \sup_{I \subseteq [0,1)} \vert \# \{\, N+1 \leq
  n \leq 2N : x_n \in I \,\} - N\,\lambda(I)\vert.
\]
We will need the following lemmata.

\begin{lem}
  If $D_N ((x_n)) \leq O (N^{1-\eta})$ then
  $\tilde{D}_N ((x_n)) \leq O (N^{1-\eta})$.
\end{lem}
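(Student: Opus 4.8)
The plan is to reduce $\tilde{D}_N$ to a difference of two ordinary discrepancies. Observe that for any interval $I \subseteq [0,1)$,
\[
\#\{\, N+1 \leq n \leq 2N : x_n \in I \,\}
= \#\{\, n \leq 2N : x_n \in I \,\} - \#\{\, n \leq N : x_n \in I \,\}.
\]
Subtracting and adding $2N\,\lambda(I)$ and $N\,\lambda(I)$ appropriately, I would write
\[
\#\{\, N+1 \leq n \leq 2N : x_n \in I \,\} - N\,\lambda(I)
= \bigl( \#\{\, n \leq 2N : x_n \in I \,\} - 2N\,\lambda(I) \bigr)
- \bigl( \#\{\, n \leq N : x_n \in I \,\} - N\,\lambda(I) \bigr).
\]
Taking absolute values, applying the triangle inequality, and then taking the supremum over $I$, we get $\tilde{D}_N((x_n)) \leq D_{2N}((x_n)) + D_N((x_n))$.

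Now I would invoke the hypothesis $D_N((x_n)) = O(N^{1-\eta})$: there is a constant $c$ with $D_N((x_n)) \leq c N^{1-\eta}$ for all $N$. Hence $D_{2N}((x_n)) \leq c (2N)^{1-\eta} = c\,2^{1-\eta} N^{1-\eta}$, and so
\[
\tilde{D}_N((x_n)) \leq c\,2^{1-\eta} N^{1-\eta} + c N^{1-\eta} = c\,(2^{1-\eta}+1)\, N^{1-\eta} = O(N^{1-\eta}),
\]
which is the claim.

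There is essentially no obstacle here; the only point requiring a word of care is that the supremum over $I$ on the left must be controlled using the \emph{same} $I$ inside both discrepancy counts on the right before passing to two independent suprema, which is exactly what the triangle inequality permits. I would present this as a two-line computation without further fuss.
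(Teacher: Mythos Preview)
Your proof is correct and is essentially identical to the paper's own argument: both split the count over $N+1\le n\le 2N$ as the difference of the counts up to $2N$ and up to $N$, apply the triangle inequality to obtain $\tilde{D}_N((x_n))\le D_{2N}((x_n))+D_N((x_n))$, and then invoke the hypothesis. The paper simply stops at that inequality without writing out the explicit constant $c(2^{1-\eta}+1)$.
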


\begin{proof}
  Take $I \subset [0,1)$. Then
  \begin{align*}
    \biggl| \sum_{n=N+1}^{2N} \mathbbm{1}_I (x_n) - N|I| \biggr|
    &= \biggl| \sum_{n=1}^{2N} \mathbbm{1}_I (x_n) - 2N|I| -
      \sum_{n=1}^{N} \mathbbm{1}_I (x_n) + N|I| \biggr| \\
    & \leq D_{2N} ((x_n)) + D_N ((x_n)). \qedhere
  \end{align*}
\end{proof}

We will also need the following result of Persson and Reeve
\cite{MR3341452}.

\begin{lem}[Persson and Reeve \cite{MR3341452}]
  \label{lem:perssonreeve}
  Let $(\mu_n)$ be a sequence of probability measures with
  $\supp \mu_n \subset E_n \subset [0,1]$, and such that
  $\mu_n \to \lambda$ weakly. If there is a constant $C$ such
  that
  \[
    I_s (\mu_n) := \iint |x-y|^{-s} \, \mathrm{d}\mu (x)
    \mathrm{d}\mu (y) < C
  \]
  for all $n$, then the set $E = \limsup E_n$ has Hausdorff
  dimension at least $s$, and $E$ has large intersections as
  defined in Section~\ref{sec:background}.
\end{lem}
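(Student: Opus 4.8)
This is the result of Persson and Reeve \cite{MR3341452}; here I only describe how one would prove it. We may assume that each $E_n$ is open, so that $E = \limsup E_n$ is a $G_\delta$ set, and that $s < 1$ (otherwise lower semicontinuity of the $s$-energy under the weak convergence $\mu_n \to \lambda$ would force the impossible bound $I_s(\lambda) \le C$). The plan is to produce a $G_\delta$ set $F \subseteq \mathbb{R}$ with $F \cap [0,1) = E$ together with, for each $t < s$, a probability measure $\nu = \nu_t$ with $\supp \nu \subseteq F$ and a global Frostman bound $\nu(B(x,r)) \le A(t)\, r^t$ for all $x$ and $r$. By the usual covering argument such a measure yields $\mathcal{H}^t_\infty(F \cap Q) \gtrsim \nu(Q)$ for every dyadic cube $Q$, and — since $\nu$ will be a Moran-type measure that is spread throughout space at every scale — also $\nu(Q) \gtrsim_t |Q|^t$; combining these, $\mathcal{H}^t_\infty(F \cap Q) \gtrsim_t |Q|^t = \mathcal{H}^t_\infty(Q)$ uniformly in $Q$. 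By Falconer's content criterion for his classes this places $F$ in $\mathcal{G}^t$, and since this holds for every $t < s$ and $\mathcal{G}^s = \bigcap_{t<s} \mathcal{G}^t$ directly from the definition recalled in Section~\ref{sec:background}, we obtain $E \in \mathcal{G}^s([0,1))$; in particular $\dimh E \ge s$.

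The structural fact that makes this work is that a bounded $s$-energy forces $s$-dimensionality at every scale: a probability measure $\mu$ with $I_s(\mu) \le C$ must, for each $\delta > 0$, distribute its mass over at least $\gtrsim \delta^{-s}$ sets of diameter $\delta$, for otherwise the pairs inside one heavy $\delta$-piece would already make $I_s(\mu) > C$. Since moreover $\mu_n \to \lambda$ weakly, these pieces are scattered throughout $[0,1]$, so $\limsup E_n$ looks roughly $s$-dimensional inside every interval, and this is what the construction of $\nu$ will capture. One builds nested compact sets $[0,1] = K_0 \supseteq K_1 \supseteq K_2 \supseteq \cdots$, each a finite union of intervals, together with probability measures $\nu_k$ supported on $K_k$. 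To pass from $K_{k-1}$ to $K_k$ one uses a long \emph{block} of indices $[m_k, m_{k+1}]$: by weak convergence, for $n$ large each $\mu_n$ redistributes its mass over $K_{k-1}$ almost uniformly, so the restrictions $\mu_n|_{K_{k-1}}$ with $n$ in the block are (after renormalisation) probability measures supported in $K_{k-1} \cap E_n$; a Chung--Erd\H{o}s second moment estimate — exactly of the kind used in the proof of Theorem~\ref{thm:measure}, applied now to these restricted measures — shows that the union of their supports covers a definite proportion of $K_{k-1}$ by pieces that are themselves $s$-dimensional at their own scale, \emph{even when $\lambda(E_n) \to 0$}. One then lets $K_k$ be a finite union of intervals thickening this covering (staying inside $\bigcup_n E_n$, which is legitimate because $\supp\mu_n$ is a compact subset of the open set $E_n$), and $\nu_k$ a suitable convex combination of the renormalised $\mu_n|_{K_{k-1}}$. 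The scales $\ell_k = \operatorname{diam} K_k$ and the block lengths are chosen so that at each stage the branching (the number of intervals) keeps pace with the refinement ratio raised to the power $t$, which keeps the Frostman constant bounded, while the uniform bound $I_s(\mu_n) < C$ makes the successive errors telescope. A weak limit $\nu$ of the $\nu_k$ is then a probability measure with $\nu(B(x,r)) \lesssim_t r^t$, and since the blocks tend to infinity and $K_k \subseteq \bigcup_{n \in [m_k,m_{k+1}]} E_n$ for each $k$, every point of $\supp \nu \subseteq \bigcap_k K_k$ lies in infinitely many $E_n$, so $\supp \nu \subseteq E$. Running the same construction periodically (equivalently, on all of $\mathbb{R}$) produces the ambient set $F$.

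The main obstacle is exactly the tension exploited above. One wants the limiting measure supported near the sets $E_n$, so that it is genuinely carried by the limsup set, yet one also needs it to be Frostman of exponent arbitrarily close to $s$; these pull against each other precisely when $\lambda(E_n) \to 0$, because restricting Lebesgue-type mass to thin sets produces concentrated, high-energy measures, and a one-shot weak-compactness argument only returns $\lambda$ itself, which is not carried by $E$. The two devices that resolve this are, first, to build $\nu$ as a Moran-type (rather than Lebesgue-like) measure, using that the uniform $s$-energy bound on the $\mu_n$ forces them to be $s$-dimensional at every scale, so that each refinement can branch enough to keep the dimension at any prescribed $t < s$; and second, to advance the construction over whole blocks of indices rather than one index at a time, so that $\bigcup_{n \in [m_k,m_{k+1}]} E_n$ is measure-theoretically substantial while each $\nu_k$ is still assembled from the genuinely low-energy measures $\mu_n$. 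The remaining, routine but delicate, work is the bookkeeping that matches the refinement ratios to the branching numbers at every stage and makes the renormalisation losses summable uniformly for all $t < s$.
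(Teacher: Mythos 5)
There is no in-paper proof to compare you against: the lemma is imported verbatim from Persson and Reeve \cite{MR3341452}, and the paper uses it as a black box. So your text can only be judged as a sketch of the original argument. Its architecture --- for each $t<s$ produce Frostman-type mass of exponent $t$ on $F\cap Q$ for dyadic intervals $Q$, invoke Falconer's content/net-measure criterion to get $F\in\mathcal{G}^t$, and finish with $\mathcal{G}^s=\bigcap_{t<s}\mathcal{G}^t$ --- is the right general shape and is close in spirit to how the Persson--Reeve lemma is proved and used.

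However, the one concrete bridging claim you make is false as stated, and it is the pivot of your plan. No probability measure $\nu$ can satisfy $\nu(Q)\ge c_t|Q|^t$ uniformly over all dyadic cubes $Q\subset[0,1]$ when $t<1$: summing over the $2^k$ dyadic intervals of length $2^{-k}$ gives $1=\nu([0,1])\ge c_t\,2^{k(1-t)}\to\infty$. (It is also incompatible with the global Frostman bound $\nu(B(x,r))\le A r^t$ you impose on the same $\nu$, since together they would make $\nu$ an Ahlfors $t$-regular probability measure on all of $[0,1]$ with $t<1$.) Hence the step ``combining these, $\mathcal{H}^t_\infty(F\cap Q)\gtrsim_t|Q|^t$ uniformly in $Q$'' collapses. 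The construction must be localized: for each dyadic $Q$ and each $t<s$ one needs a measure $\nu_Q$ supported on $F\cap Q$ with total mass comparable to $|Q|^t$ and with a Frostman constant uniform in $Q$, i.e.\ Falconer's criterion has to be verified cube by cube; note also that the cheap global bound $I_t(\mu_n|_Q)\le I_t(\mu_n)$ together with $\mu_n(Q)\to|Q|$ only yields contents of order $|Q|^2$, which is strictly weaker than $|Q|^t$ and not sufficient, so genuine localization of the energy at every scale is required. Arranging that uniformity is precisely the non-trivial content of the Persson--Reeve lemma, and in your sketch it is deferred to ``routine but delicate bookkeeping'', along with the unproved assertion that the block/Chung--Erd\H{o}s construction keeps the Frostman constants bounded. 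As it stands the proposal is an honest pointer to \cite{MR3341452} rather than a proof, and its central explicit step would fail.
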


The reader will have noted that the upper bound on the dimension
in each case is the same. It is the easier part of all the
statements. We prove it here as a lemma.

\begin{lem}
  \label{lem:upper}
  Let $(x_n)_{n=1}^\infty$ be any sequence in $[0,1)$ and let $\alpha \ge 1$. Then,
  \[
    \hdim \{\, \gamma \in [0,1) : \Vert x_n - \gamma \Vert <
    n^{-\alpha} \text{ infinitely often} \,\} \le \frac{1}{\alpha}.
  \]
\end{lem}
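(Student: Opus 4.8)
The plan is to use the Borel--Cantelli lemma directly, since the set in question is a $\limsup$ set of balls whose radii are summable in the appropriate sense. Write
\[
  W = \bigcap_{N=1}^\infty \bigcup_{n \ge N} B(x_n, n^{-\alpha}) \cap [0,1),
\]
so for each $N$ the family $\{B(x_n, n^{-\alpha}) \cap [0,1) : n \ge N\}$ is a cover of $W$ by sets of diameter $2n^{-\alpha}$, and these diameters go to $0$ as $N \to \infty$, so the cover is admissible for computing $\mathcal{H}^s_\delta$ for $\delta$ as small as we like.

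First I would fix $s > 1/\alpha$ and estimate the $s$-dimensional Hausdorff sum of this cover:
\[
  \sum_{n \ge N} \bigl( 2 n^{-\alpha} \bigr)^s = 2^s \sum_{n \ge N} n^{-\alpha s}.
\]
Since $\alpha s > 1$, the series $\sum_{n} n^{-\alpha s}$ converges, so its tail $\sum_{n \ge N} n^{-\alpha s}$ tends to $0$ as $N \to \infty$. Hence $\mathcal{H}^s_\delta(W)$ can be made arbitrarily small by taking $N$ large (equivalently $\delta$ small), giving $\mathcal{H}^s(W) = 0$. Therefore $\hdim W \le s$ for every $s > 1/\alpha$, and letting $s \downarrow 1/\alpha$ yields $\hdim W \le 1/\alpha$.

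There is essentially no obstacle here; the only points requiring a word of care are that each $B(x_n, n^{-\alpha}) \cap [0,1)$ has diameter at most $2n^{-\alpha}$ (it is contained in an interval of that length even when it wraps around the endpoints of $[0,1)$, since $\Vert \cdot \Vert$ is the distance to the nearest integer — if one is fussy, split it into at most two genuine subintervals, which only changes the constant $2^s$ to $2\cdot 2^s$ and does not affect convergence), and that the tail of a convergent series vanishes. I would present this in a few lines. Note the argument uses nothing about the sequence $(x_n)$ beyond its living in $[0,1)$, which is exactly the generality claimed, and it is the common upper bound referenced for Theorems~\ref{thm:Hdim}, \ref{thm:Hdim2} and \ref{thm:extraassumption}.
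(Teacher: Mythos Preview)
Your argument is correct and essentially identical to the paper's own proof: both fix $s>1/\alpha$, use the natural cover of the $\limsup$ set by the balls $B(x_n,n^{-\alpha})$ for $n\ge N$, observe that $\sum_{n\ge N}(2n^{-\alpha})^s$ is the tail of a convergent series and hence tends to $0$, conclude $\mathcal{H}^s(W)=0$, and let $s\downarrow 1/\alpha$. Your remark about possible splitting into two subintervals when the ball wraps around the endpoints is a harmless refinement the paper does not bother to mention.
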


\begin{proof}
  Let $s = 1/\alpha + \epsilon$ for some $\epsilon > 0$. For any
  $N \in \mathbb{N}$, the set, $W$ say, in the statement of the
  lemma is covered by the intervals
  $(x_n - n^{-\alpha}, x_n + n^{-\alpha})$, each of length
  $2n^{-\alpha}$. For $\delta > 0$, pick $N_0 \in \mathbb{N}$ so
  that $2n^{-\alpha} < \delta$ for $n \ge N_0$. Then, for any
  $N \ge N_0$
  \[
    \mathcal{H}^s_\delta (W) \le \sum_{n=N}^\infty
    (2n^{-\alpha})^{s} = 2^{-1-\alpha\epsilon} \sum_{n=N}^\infty
    n^{-1-\alpha\epsilon}.
  \]
  The latter tends to $0$ as $N$ tends to infinity, so for any
  $\delta > 0$, $\mathcal{H}^s_\delta (W) = 0$, whence
  $\mathcal{H}^s (W) = 0$. But then,
  $\hdim (W) \le s = 1/\alpha + \epsilon$. As $\epsilon> 0$ was
  arbitrary, we are done.
\end{proof}

\begin{proof}[Proof of Theorem~\ref{thm:Hdim}]
  By Lemma~\ref{lem:upper}, we need only prove the lower bound.
  We let $\mu_N$ be the sequence of probability measures defined
  by
  \[
    \frac{\mathrm{d} \mu_N}{\mathrm{d}x} = \frac{1}{N}
    \sum_{k=N+1}^{2N} \frac{1}{2(2N)^{-\alpha}}
    \mathbbm{1}_{B(a_k x, r_N)}, \qquad r_N = (2N)^{-\alpha},
  \]
  and consider
  \[
    I_t (\mu_N) = \iint |x-y|^{-t} \, \mathrm{d}\mu (x)
    \mathrm{d}\mu (y) = c_0 \int |\hat{\mu}_N (\xi)|^2 |\xi|^{t-1} \,
    \mathrm{d} \xi,
  \]
  where $c_0$ is a constant which does not depend on $N$ and $t$.

  We have
  \[
    \frac{\mathrm{d} \mu_N}{\mathrm{d}x} = \frac{1}{N}
    \sum_{k=N+1}^{2N} \frac{1}{2(2N)^{-\alpha}} \mathbbm{1}_{B(0,
      r_N)} * \delta_{a_k x},
  \]
  so that
  \[
    \hat{\mu}_N (\xi) = (2N)^{\alpha-1} \sum_{k=N+1}^{2N}
    \widehat{\mathbbm{1}_{B(0, r_N)}} (\xi) e^{-i 2 \pi \xi a_k x},
  \]
  and 
  \[
    |\hat{\mu}_N (\xi)|^2 = (2N)^{2\alpha - 2} \sum_{k=N+1}^{2N}
    \sum_{l=N+1}^{2N} |\widehat{\mathbbm{1}_{B(0, r_N)}} (\xi)|^2 e^{-i
      2 \pi \xi (a_k - a_l) x}.
  \]

  Since
  \[
    \widehat{\mathbbm{1}_{B(0, r_N)}} (\xi) = r_N \frac{\sin(2\pi r_n
      \xi)}{\pi r_N \xi},
  \]
  we can estimate that
  \[
    |\widehat{\mathbbm{1}_{B(0, r_N)}} (\xi)|^2 \leq m_N (\xi) := \min
    \{2 r_N, \pi |\xi|^{-1} \}^2.
  \]
  It follows that
  \begin{align*}
    \int |\hat{\mu}_N (\xi)|^2 \, \mathrm{d}\mu
    & \leq (2N)^{2\alpha-2} \sum_{k=N+1}^{2N} \sum_{l=N+1}^{2N} \int m_N
      (\xi) e^{-i 2 \pi \xi (a_k - a_l) x} \, \mathrm{d} \mu (x) \\
    & = (2N)^{2\alpha-2} m_N (\xi) \sum_{k=N+1}^{2N} \sum_{l=N+1}^{2N}
      \hat{\mu} ((a_k - a_l) \xi).
  \end{align*}

  Take constants $c$ and $\eta > 0$ such that
  $|\hat{\mu} (\xi)| \leq c \min \{1, |\xi|^{-\eta} \}$. Then, we
  have
  \[
    \sum_{k=N+1}^{2N} \sum_{l=N+1}^{2N} \hat{\mu} ((a_k - a_l)
    \xi) \leq c N^2.
  \]
  If $|\xi| \geq 1$, we can use that $(a_k)$ is lacunary to prove
  that
  \[
    \sum_{k=N+1}^{2N} \sum_{l=N+1}^{2N} \hat{\mu} ((a_k - a_l)
    \xi) \leq \sum_{k=N+1}^{2N} \sum_{l=N+1}^{2N} c \min \{1,
    |a_k - a_l|^{-\eta}\} \leq C N,
  \]
  for some constant $C$. We thus have
  \[
    \int |\hat{\mu}_N|^2 \, \mathrm{d}\mu \leq c (2N)^{2 \alpha}
    m_N (\xi),
  \]
  and if $|\xi| \geq 1$, we have
  \[
    \int |\hat{\mu}_N|^2 \, \mathrm{d}\mu \leq C (2N)^{2\alpha -
      1} m_N (\xi).
  \]

  Combining the last two estimates, we get that
  \begin{align*}
    \int I_t (\mu_N) \, \mathrm{d} \mu
    & = c_0 \iint |\hat{\mu}_N (\xi)|^2 |\xi|^{t-1} \, \mathrm{d}
      \xi \, \mathrm{d} \mu \\
    &= c_0 \iint |\hat{\mu}_N (\xi)|^2 \, \mathrm{d} \mu\, |\xi|^{t-1} \,
      \mathrm{d} \xi \\
    &\leq c' N^{2\alpha} \int_0^1 m_N (\xi)
      |\xi|^{t-1} \, \mathrm{d} \xi + C' N^{2 \alpha - 1} \int_1^\infty
      m_N (\xi) |\xi|^{t-1} \, \mathrm{d} \xi.
  \end{align*}
  We can now easily compute that
  \begin{align*}
    N^{2\alpha} \int_0^1 m_N (\xi) |\xi|^{t-1} \, \mathrm{d} \xi
    &\leq N^{2\alpha} r_N^2 \int_0^1 \xi^{t-1} \, \mathrm{d} \xi =
      c_1,\\ N^{2\alpha-1} \int_1^{\frac{\pi}{2} r_N^{-1}} m_N (\xi)
    |\xi|^{t-1} \, \mathrm{d} \xi
    & = 4 N^{2\alpha-1} r_N^2 \int_1^{\frac{\pi}{2} r_N^{-1}}
      \xi^{t-1} \, \mathrm{d}\xi \leq c_2 N^{- 1 + \alpha t}, \\
    N^{2\alpha-1} \int_{\frac{\pi}{2}
    r_N^{-1}}^\infty m_N (\xi) |\xi|^{t-1} \, \mathrm{d} \xi
    &= 2
      \pi^2 C N^{2\alpha-1} \int_{\frac{\pi}{2} r_N^{-1}}^\infty
      \xi^{t-3} \, \mathrm{d}\xi \leq c_3 N^{-1 + \alpha t},
  \end{align*}
  where $c_1$, $c_2$ and $c_3$ do not depend on $N$. Hence
  \[
    \int I_t (\mu_N) \, \mathrm{d}\mu \leq c_4 + c_5 N^{-1 +
      \alpha t},
  \]
  which is uniformly bounded in $N$ provided $t < 1/\alpha$. 

  Fix $t < 1/\alpha$. It follows that for $\mu$-almost all $x$,
  there is a subsequence $(N_k)$ of the natural numbers, such
  that $I_t (\mu_{N_k})$ is uniformly bounded in
  $k$. Lemma~\ref{lem:perssonreeve} finishes the proof.
\end{proof}

\begin{proof}[Proof of Theorem~\ref{thm:Hdim2}]
  As before, the upper bound is taken care of by Lemma~\ref{lem:upper}. Once again, we define probability measures
  $\mu_N$ with densities
  \[
    \frac{\mathrm{d} \mu_N}{\mathrm{d}x} = \frac{1}{N}
    \sum_{n=N+1}^{2N} \frac{1}{2(2N)^{-\alpha}} \mathbbm{1}_{B(x_n,
      (2N)^{-\alpha})}.
  \]
  Then
  \[
    \supp \mu_N = E_N = \bigcup_{n=N+1}^{2N} B(x_n,
    (2N)^{-\alpha}).
  \]
  Hence $E_N \subset W_N$, where
  \[
    W_N = \bigcup_{n=N+1}^{2N} B(x_n, n^{-\alpha}).
  \]
  It follows that $\limsup E_N \subset W = \limsup W_N$.

  Let $s = \eta/\alpha$. We show that there is a constant $c_1$ such that
  \[
    \mu_N (I) \leq c_1 |I|^s
  \]
  holds for every $N$ and every interval $I$ (and hence for every
  Borel set).
  
  Let $M (I,N)$ denote the number of $n$ with $N + 1 \leq n \leq 2N$
  and $x_n \in I$. We have
  \[
    M (N,I) \leq N |I| + \tilde{D}_N ((x_n)) \leq N |I| + c
    N^{1-\eta}.
  \]

  In every point $y \in [0,1)$, the number of $n = N+1, \ldots,
  2N$ such that $y \in B(x_n, (2N)^{-\alpha})$ is at most
  \[
    \sup_{|I|=2(2N)^{-\alpha}} M (I,N) \leq 2^{1-\alpha}
    N^{1-\alpha} + c N^{1-\eta} < (1 + c) N^{1-\eta}.
  \]
  Hence, the density of $\mu_N$ satisfies
  \begin{equation}
    \label{eq:density}
    \frac{\mathrm{d} \mu_N}{\mathrm{d} x} \leq \frac{1}{N}
    \frac{1}{2(2N)^{-\alpha}} (1+c) N^{1-\eta} \leq 2^{\alpha - 1} (1 +
    c) N^{\alpha - \eta}.
  \end{equation}
  
  If $|I| < N^{-\alpha}$, we use \eqref{eq:density} to estimate that
  \begin{align*}
    \mu_N (I) &\leq 2^{\alpha - 1} (1+c) N^{\alpha - \eta} |I| \\
              &= 2^{\alpha - 1} (1+c) N^{\alpha - \eta}
                |I|^{1-\eta/\alpha} |I|^{\eta/\alpha} \leq
                2^{\alpha - 1} (1+c) |I|^{\eta/\alpha}.
  \end{align*}
  
  If $|I| \geq N^{-\alpha}$, let $3I$ denote the interval
  concentric with $I$ and such that $|3I| = 3 |I|$. Then
  \begin{equation}
    \label{eq:muNestimate}
    \mu_N (I) \leq \frac{1}{N} M(3I, N) \leq 3|I| + c N^{-\eta}
    \leq 3 |I| + c |I|^{\eta/\alpha} \leq (3 + c)
    |I|^{\eta/\alpha}.
  \end{equation}

  This shows that $\mu_N (I) \leq c_2 |I|^s$ with
  $c_2 = 2^{\alpha - 1} (3+c)$.

  It now follows for $t < s$ that
  \begin{align*}
    I_t (\mu_N) &= \iint |x-y|^{-t} \, \mathrm{d} \mu_N (x) \mathrm{d}
                  \mu_N (y) \\
                &= \int \int_0^\infty \mu_N (B(x,u^{-1/t})) \, \mathrm{d}u
                  \, \mathrm{d} \mu_N(x) \\
                &\leq \int \int_0^\infty \min \{1, c_2
                  (u^{-1/t})^s \} \, \mathrm{d}u \, \mathrm{d}
                  \mu_N (x) \\
                &= \int_0^\infty \min \{ 1, c_2 u^{-s/t} \} \,
                  \mathrm{d}u =: C < \infty.
  \end{align*}

  Since $D_N ((x_n)) \leq o (N)$, the measures $\mu_N$ converges
  weakly to $\lambda$ and it follows from
  Lemma~\ref{lem:perssonreeve} that $\dimh \limsup E_N \geq
  t$. Since $t$ can be taken arbitrary close to
  $s = \eta /\alpha$ we have
  $\dimh W \geq \dimh \limsup E_N \geq \eta/\alpha$.
\end{proof}

\begin{proof}[Proof of Theorem~\ref{thm:extraassumption}]
  The proof is similar to that of Theorem~\ref{thm:Hdim2}. We
  define $\mu_N$ in the same way. It is then enough to prove that
  there is a constant $c_3$ such that
  $\mu_N (I) \leq c_3 |I|^\frac{1}{\alpha}$ holds for all $N$ and
  all intervals $I$. We consider three cases.

  First, we assume that $|I| \leq N^{-\beta}$. Then $I$
  intersects at most two balls $B(x_n,
  (2N)^{-\alpha})$. Therefore, we have
  \[
    \mu_N (I) \leq |I| \frac{2}{N} \frac{1}{2(2N)^{-\alpha}} \leq
    2^\alpha |I| N^{\alpha - 1} \leq 2^\alpha |I|^{1 -
      \frac{1}{\alpha} (\alpha - 1)} = 2^\alpha
    |I|^\frac{1}{\alpha}.
  \]

  If $\beta > \eta \alpha$, then we assume that
  $N^{-\beta} \leq |I| \leq N^{-\eta \alpha}$. Otherwise, we do
  not have to consider this case.

  The assumption $\beta \leq \eta (\alpha - 1) + 1$ implies that
  $\alpha \geq \beta$. Hence, $I$ intersects a ball
  $B(x_n, (2N)^{-\alpha})$ only if $x_n \in 3 I$. The number of
  such $x_n$ is at most $3 |I| / (2N)^{-\beta}$ and
  \[
    \mu_N (I) \leq 2^\beta 3 |I| N^\beta \frac{1}{N} = 2^\beta 3
    |I|^\frac{1}{\alpha} |I|^{1-\frac{1}{\alpha}} N^{\beta - 1}
    \leq 2^\beta 3 |I|^\frac{1}{\alpha} N^{- \eta (\alpha - 1) +
      \beta - 1}.
  \]
  Since $- \eta (\alpha - 1) + \beta - 1 \leq 0$ by assumption,
  we have $\mu_N (I) \leq 2^\beta 3 |I|^\frac{1}{\alpha}$.

  Finally, assume that $|I| \geq N^{-\eta \alpha}$. Then we have
  as in \eqref{eq:muNestimate} that
  \[
    \mu_N (I) \leq 3 |I| + c N^{-\eta} \leq (3 + c)
    |I|^\frac{1}{\alpha}.
  \]

  All estimates taken together, we have for any interval $I$ and
  any $N$ that $\mu_N (I) \leq c_3 |I|^\frac{1}{\alpha}$, with
  $c_3 = \max \{2^\alpha, 2^\beta 3, 3+c \}$. The rest of the
  proof is exactly as in the proof of Theorem~\ref{thm:Hdim2}.
\end{proof}

\begin{proof}[Proof of Corollary~\ref{cor:lastcor}]
  Let $\eta < \frac{1}{2}$. Then $D_N ((q_ny)) \leq c N^{1-\eta}$
  for $\lambda$-a.e. $y \in [0,1)$.
  
  Let $\beta > 2$ and consider the set
  \[
    \Delta = \{\, |q_l - q_k| : k \neq l \,\}.
  \]
  Let $(n_j)_{j=1}^\infty$ be an enumeration of $\Delta$ such
  that if $q_l - q_k \in \Delta$, then there is a $j \leq l^2$
  such that $n_j = q_l - q_k$. (Such an enumeration clearly
  exists.)

  Consider the set
  \[
    V = \{\, y \in [0,1) : \lVert n_j y \rVert <
    j^{-\beta/2} \text{ for infinitely many } n \in \mathbb{N}
    \,\}.
  \]
  Since for each $j$,
  \[
    \lambda \{\, y \in [0,1) : \lVert n_j y - \gamma \rVert <
    j^{-\beta/2} \,\} = 2 j^{-\beta/2},
  \]
  we have by the first Borel--Cantelli lemma that
  $\lambda (V) = 0$. It follows that for $\lambda$-a.e.\ $y$
  there exists $c > 0$ such that
  $\lVert n_j y \rVert > c j^{-\beta/2}$ for all $j$.

  Suppose now that $y$ is such a typical point, and consider the
  sequence $(q_n y)$. Take $1 \leq k < l \leq N$ and let
  $j \leq l^2$ be such that $n_j = q_l - q_k \in \Delta$. Since
  $j \leq N^2$, and by the choice of $y$, we have
  \[
    \lVert q_k y - q_l y \rVert = \lVert (q_l - q_k) y \rVert =
    \lVert n_j y \rVert > c j^{-\beta/2} \geq c N^{-\beta}.
  \]

  This shows that $d_N((q_ny)) > cN^{-\beta}$. It follows by
  Theorem~\ref{thm:extraassumption} that
  $\dimh W_{y,\alpha} = \frac{1}{\alpha}$ whenever $\alpha$ is
  such that $\beta \leq \eta (\alpha - 1) + 1$. Since $\beta$ can
  be taken as close to $2$ and $\eta$ as close to $\frac{1}{2}$
  as we desire, $\dimh W_{y,\alpha} = \frac{1}{\alpha}$ holds for
  almost all $y$ as long as $\alpha > 3$. The case $\alpha = 3$
  follows by a limiting proceedure.
\end{proof}

\section{Concluding remarks}

We strongly suspect that --- as in the case of Lebesgue measure
--- the conclusion of Theorem~\ref{thm:measure} can be
strengthened from positive to full measure. However, the
`inflation argument' of Cassels used in the proof of Theorem \ref{thm:Lmeasure} is not immediately applicable
unless both measures are Lebesgue or at least have the same
scaling properties. Hence, we do not at present know how to
extend this.

In Corollary \ref{cor:lastcor}, the exact value of the Hausdorff dimension is calculated, but only for $\alpha \ge 3$. Again, it would be natural to suspect that this can be extended to $\alpha > 1$, but our methods do not allow us to do this.

As a final remark, in the paper of Pollington, Velani, Zafeiropoulos and Zorin \cite{MR4425845}, a weaker requirement on the Fourier decay of the measure $\mu$ is required. It is possible that positive Fourier dimension can be weakened to, say, logarithmic decay. We have not attempted to do this in the present manuscript.

\providecommand{\bysame}{\leavevmode\hbox to3em{\hrulefill}\thinspace}
\providecommand{\MR}{\relax\ifhmode\unskip\space\fi MR }
% \MRhref is called by the amsart/book/proc definition of \MR.
\providecommand{\MRhref}[2]{%
  \href{http://www.ams.org/mathscinet-getitem?mr=#1}{#2}
}
\providecommand{\href}[2]{#2}

\end{document}